\theoremstyle{definition}
\newtheorem{definition}{Definition}[section]
\newtheorem{theorem}[definition]{Theorem}
\newtheorem{lemma}[definition]{Lemma}
\newtheorem{proposition}[definition]{Proposition}
\newcommand{\sq}[1]{\ifx#1([\else\ifx#1)]%
  \else\message{invalid use of "sq"}\fi\fi}
\newcommand{\C}{\mathbb{C}}
\newcommand{\PP}{\mathbb{P}}
\DeclareMathOperator{\ord}{ord}
\DeclareMathSymbol{\idot}{\mathbin}{operators}{`\.}
\begin{document}
\title[Meromorphic maps from ${\Bbb C}^{\ell}$ into semi-abelian varieties and general projective varieties]{Meromorphic maps from ${\Bbb C}^{p}$ into   semi-abelian varieties and general projective varieties}
\author{Zhe Wang}
\address{Department of Mathematics\newline
	\indent University of Houston\newline
	\indent Houston,  TX 77204, U.S.A.} 
\email{zwang224@cougarnet.uh.edu}

\begin{abstract}
In \cite{W. Stoll}, W. Stoll proposed a  method of studying holomorphic functions of several complex variables by reducing them to  one variable through fiber integration.
In this paper, we use this method to extend some important  Nevanlinna-type results  for holomorphic curves into projective varieties to meromorphic maps from ${\Bbb C}^{p}$ to projective varieties.
This includes Bloch's theorem and Noguchi-Winkelmann-Yamanoi's Second Main Theorem for holomorphic maps into semi-abelian varieties intersecting an effective divisor, as well as  
 Huynh-Vu-Xie's Second Main Theorem for meromorphic maps into projective space intersecting with a generic hypersurface with sufficiently high degree. \end{abstract}
 \thanks{2010\ {\it Mathematics Subject Classification.} 32H30, 32A22, 32Q45.}  
\keywords{Meromorphic maps of several complex variables, Bloch's theorem, Semi-abelian varieties, Second Main Theorem, Nevanlinna Theory}

\baselineskip=16truept \maketitle \pagestyle{myheadings}
\markboth{}{The Second Main Theorem}
\section{Introduction}
Recently, there have been several works in the study of meromorphic maps from ${\Bbb C}^{p}$ to  projective varieties, where $p\ge 1$ is an integer.
Notably,  A. Etesse \cite{Ete} developed the theory of  the Green-Griffiths jet differentials of $p$-germs, Q. Cai, M. Ru, and C. J. Yang  \cite{CaiRuYang2} used the result of Etesse to establish several defect relations for meromorphic maps from ${\Bbb C}^{p}$ to  projective varieties. See also \cite{PR}, and for earlier results, \cite{HuYang}.
In this paper, we study meromorphic maps from ${\Bbb C}^{p}$ into projective varieties by reducing the $p$-variables  to  one variable through fiber integration, following the method of  W. Stoll \cite{W. Stoll}.  

We first focus on the meromorphic maps  from ${\Bbb C}^{p}$ into semi-abelian varieties. The key result we used is the 
following (see Corollary 5.1.9 in \cite{J. Noguchi and J. Winkelmann}): {\it A semi-torus $A$ contains only  countably many sub-semi-tori}.
We also note that every meromorphic map from ${\Bbb C}^{p}$ into a semi-abelian variety is indeed holomorphic. Therefore, we only state the results for holomorphic  maps from ${\Bbb C}^{p}$ into  semi-abelian varieties.

Our first result is to establish Bloch's theorem for several complex variables.
\begin{theorem}[Bloch's theorem for several variables]\label{Bloch}
Let $A$ be a semi-abelian variety and let 
$f : \mathbb{C}^p \to A$ be a holomorphic map. 
Then the Zariski closure of $f(\C^p)$ is a translate of a semi-abelian subvariety of $A$.

\end{theorem}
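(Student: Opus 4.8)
The plan is to deduce the several-variable statement directly from the one-variable Bloch theorem (the case $p=1$, which I take as known) by manufacturing a \emph{single} entire curve $\mathbb{C}\to A$ whose image is already Zariski dense in $X:=\overline{f(\mathbb{C}^p)}$. If I can find a holomorphic $\gamma:\mathbb{C}\to\mathbb{C}^p$ with $\overline{(f\circ\gamma)(\mathbb{C})}=X$, then $f\circ\gamma$ is a holomorphic map of one variable and the classical theorem applies verbatim: the Zariski closure of its image, namely $X$, is a translate of a semi-abelian subvariety of $A$, which is exactly the assertion.

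To build such a $\gamma$ I would proceed in two steps. First, extract a countable Zariski-dense subset of $X$ from $f(\mathbb{C}^p)$: embedding $A$ into some $\mathbb{P}^N$ and working degree by degree, the homogeneous polynomials of degree $\le m$ vanishing on $f(\mathbb{C}^p)$ coincide with those vanishing on $X$ (since $f(\mathbb{C}^p)$ is dense), and because that space of polynomials is finite-dimensional, finitely many evaluation points already cut out the degree-$\le m$ part of the ideal. Taking the union over all $m$ yields points $z_i\in\mathbb{C}^p$ with $w_i:=f(z_i)$ and $\overline{\{w_i\}}=X$. Second, pass to one variable by interpolation: fixing distinct nodes $t_i=i\in\mathbb{C}$, the classical interpolation theorem for entire functions (surjectivity of restriction $\mathcal{O}(\mathbb{C})\to\prod_i\mathbb{C}$ onto a discrete set) provides, for each coordinate $1\le j\le p$, an entire $\gamma_j$ with $\gamma_j(t_i)=(z_i)_j$. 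Then $\gamma=(\gamma_1,\dots,\gamma_p):\mathbb{C}\to\mathbb{C}^p$ satisfies $\gamma(t_i)=z_i$, so $(f\circ\gamma)(\mathbb{C})\supseteq\{w_i\}$, while $(f\circ\gamma)(\mathbb{C})\subseteq f(\mathbb{C}^p)$ forces $\overline{(f\circ\gamma)(\mathbb{C})}=X$, completing the reduction.

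The only step requiring genuine care is the extraction of a countable Zariski-dense subset; once that is in place the interpolation and the appeal to the one-variable theorem are routine, and notably this route does not invoke the cited countability of sub-semi-tori. An alternative argument, closer to the fiber-integration philosophy of the paper and to that countability result, is to restrict $f$ to affine lines $t\mapsto f(a+tv)$, apply one-variable Bloch to each to write $\overline{f(a+tv)}=f(a)+B_{a,v}$ with $B_{a,v}$ a semi-abelian subvariety, and then use that only countably many such $B_{a,v}$ can occur to isolate a generic common $B$ and identify $X$ with a single $B$-translate. I expect the main obstacle in that second approach to be the measurability and genericity needed to select the generic direction and to promote the pointwise line data to a statement about the full closure $X$; this is precisely the difficulty that the countability of sub-semi-tori is designed to resolve.
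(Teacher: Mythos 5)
Your proof is correct, and it takes a genuinely different route from the paper's. Both arguments reduce the statement to the one-variable Bloch theorem by producing a single entire curve in $\mathbb{C}^p$ whose composition with $f$ has Zariski-dense image in $X=\overline{f(\mathbb{C}^p)}^{\mathrm{Zar}}$, but the curve is produced by different mechanisms. You build it by hand: the finite-dimensionality of each space of degree-$m$ forms yields (via a minimal-dimension argument among finite intersections of evaluation kernels) a countable set $\{w_i\}=\{f(z_i)\}\subset f(\mathbb{C}^p)$ that is Zariski dense in $X$, and Weierstrass-type interpolation along a discrete set of nodes gives an entire $\gamma:\mathbb{C}\to\mathbb{C}^p$ with $\gamma(t_i)=z_i$, so that $f\circ\gamma$ is the desired curve; both steps are sound, and no property of semi-abelian varieties enters until the final appeal to the $p=1$ theorem. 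The paper instead normalizes $f(0)=0$ and works only with the radial restrictions $f_{\vec a}(z)=f(z\vec a)$: since $X$ contains only countably many proper semi-abelian subvarieties $A_1,A_2,\dots$ (Corollary 5.1.9 in Noguchi--Winkelmann), and each bad set $K_i=\{\vec a\in S_p(1): f_{\vec a}(\mathbb{C})\subset A_i\}$ has measure zero by Lemma \ref{zero measure}, almost every $\vec a\in S_p(1)$ satisfies $\overline{f_{\vec a}(\mathbb{C})}^{\mathrm{Zar}}=X$, and one-variable Bloch applied to such an $f_{\vec a}$ (a curve through $0$, so the translate is an honest subgroup) finishes. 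What each approach buys: yours is more elementary and more general---it shows that for \emph{any} holomorphic map of $\mathbb{C}^p$ into a quasi-projective variety the Zariski closure of the image is already the closure of a single entire curve's image, so any Bloch-type theorem transfers automatically, with no countability input and no measure theory. The paper's argument buys a stronger, reusable conclusion: Zariski density of the \emph{radial} restriction $f_{\vec a}$ for almost every direction $\vec a$, which is exactly what the proof of Theorem \ref{SEMIABELIAN} later quotes (``by the proof of Bloch's theorem above'') and what the fiber-integration identities of Proposition \ref{2.5} require; your interpolated $\gamma$ is not a radial line, so it could not serve that quantitative purpose, although for Theorem \ref{Bloch} itself it is entirely adequate. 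Finally, the alternative you sketch in your closing paragraph is essentially the paper's proof (with radial lines through a normalized base point rather than general affine lines), and the measurability/genericity difficulty you flag there is precisely what Lemma \ref{zero measure} combined with the countability of sub-semi-tori resolves.
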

Next we extend the result of  Noguchi-Winkelmann-Yamanoi's Second Main Theorem (see Theorem~6.4.1 in \cite{J. Noguchi and J. Winkelmann} or \cite{NWY}; see also \cite{NWY2})
 to the case of several complex variables.
\begin{theorem}\label{SEMIABELIAN}
Let $A$ be a semi-torus with equivariant compactification $\overline{A}$.  
Let $f:\,\C^p \to A$ be a holomorphic map with Zariski dense image in $\overline{A}$.  
Let $D$ be an effective divisor on $A$ which extends to a divisor $\overline{D}$ on $\overline{A}$. Then, possibly after changing the compactification $\overline{A}$ (depending only on $D$ and independent of $f$), there exists a positive integer $k_0$, depending on $f$ and $D$, such that
\[
T_f(r, \overline{D})\leq N^{(k_0)}_{f}(r, D)+S_f(r, \overline{D}) .
\]
\end{theorem}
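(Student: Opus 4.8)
The plan is to reduce the statement to the one-variable Second Main Theorem of Noguchi--Winkelmann--Yamanoi by means of Stoll's fiber-integration technique, which is the organizing principle of this paper. For a unit vector $a$ (equivalently a point $a\in\PP^{p-1}$) I set $f_a(\zeta):=f(\zeta a)$, a holomorphic curve $f_a:\C\to A$. The first ingredient is the family of averaging identities furnished by the fiber-integration framework, expressing the several-variable Nevanlinna functions of $f$ as integrals over directions of the corresponding one-variable functions of the slices $f_a$; schematically
\[
T_f(r,\overline{D}) = \int_{\PP^{p-1}} T_{f_a}(r,\overline{D})\, d\sigma(a) + O(1), \qquad N^{(k_0)}_f(r,D) = \int_{\PP^{p-1}} N^{(k_0)}_{f_a}(r,D)\, d\sigma(a) + O(1),
\]
where $\sigma$ is the Fubini--Study probability measure on $\PP^{p-1}$. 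Granting these identities, the theorem follows by applying the one-variable theorem slicewise and integrating.

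Before invoking the one-variable theorem the slices must be Zariski dense, and this is where the two cited inputs enter. By Theorem~\ref{Bloch} (in the case $p=1$) the Zariski closure $\overline{f_a(\C)}^{\,\mathrm{Zar}}$ is a translate of a semi-abelian subvariety $A_a\subseteq A$, and since $f_a(0)=f(0)$ this translate is exactly $f(0)+A_a$. By Corollary~5.1.9 of \cite{J. Noguchi and J. Winkelmann} only countably many subvarieties $A_a$ can occur. If the set of directions with $A_a\subsetneq A$ had positive $\sigma$-measure, then by pigeonhole some fixed proper $A'\subsetneq A$ would occur on a positive-measure set $E'\subseteq\PP^{p-1}$; for the quotient homomorphism $q:A\to A/A'$ the composite $q\circ f$ would be constant, with value $q(f(0))$, along every line $\C a$ with $a\in E'$, hence constant on the positive-measure cone $\bigcup_{a\in E'}\C a$, hence constant on all of $\C^p$ by the identity theorem for holomorphic mappings. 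This forces $f(\C^p)\subseteq f(0)+A'$, contradicting Zariski density. Therefore $A_a=A$ for $\sigma$-almost every $a$, and the one-variable theorem applies to almost every slice.

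For each such $a$ the Noguchi--Winkelmann--Yamanoi theorem (Theorem~6.4.1 of \cite{J. Noguchi and J. Winkelmann}) supplies the compactification---which depends only on $D$ and is therefore common to all slices---together with $T_{f_a}(r,\overline{D})\le N^{(k_0)}_{f_a}(r,D)+S_{f_a}(r,\overline{D})$. Since for almost every $a$ the subvariety generated is the full $A$, the geometric data governing the truncation level are independent of $a$, and I expect to be able to choose a single $k_0$, depending only on $f$ and $D$, valid for almost every slice; establishing this uniformity is a point that must be checked with care. Integrating the slicewise inequality against $d\sigma$ and using the averaging identities converts the left side into $T_f(r,\overline{D})$ and the counting term into $N^{(k_0)}_f(r,D)$, up to $O(1)$.

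The main obstacle is the error term: one must show that $\int_{\PP^{p-1}} S_{f_a}(r,\overline{D})\, d\sigma(a)=S_f(r,\overline{D})$. The difficulty is that the one-variable estimate $S_{f_a}(r,\overline{D})=O\!\left(\log^+ T_{f_a}(r,\overline{D})+\log r\right)$ holds only outside an exceptional set of finite measure that depends on $a$, so a fixed $r$ may be exceptional for a positive-measure set of directions. To circumvent this I would not integrate the estimate in its post-Borel form, but instead use the sharper calculus-lemma bound in which $S_{f_a}$ is dominated by a logarithmic expression in the derivative $\tfrac{d}{dr}T_{f_a}(r,\overline{D})$; integrating this bound over $a$, using concavity of $\log^+$ and Jensen's inequality to pass the logarithm outside the integral, and commuting $\int d\sigma$ with $\tfrac{d}{dr}$ so that $\int \tfrac{d}{dr}T_{f_a}\,d\sigma=\tfrac{d}{dr}T_f$, reduces everything to a single application of the Borel growth lemma to the averaged characteristic $T_f(r,\overline{D})$. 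This yields the desired bound outside one exceptional set in $r$ and completes the proof.
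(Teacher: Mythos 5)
Your overall skeleton --- slice $f$ along directions $\vec a \in S_p(1)$, use the one-variable Bloch theorem plus the countability of sub-semi-tori to get Zariski density of almost every slice, then average via the fiber-integration identities --- is exactly the paper's, and your density argument (pigeonhole plus quotient homomorphism) is a correct variant of the paper's. The gap is that you then try to invoke the one-variable Noguchi--Winkelmann--Yamanoi theorem as a black box, and both of the places where you sense trouble are genuine failures of the black-box approach; the paper's proof works precisely because it re-runs the one-variable proof rather than quoting it. First, the uniformity of $k_0$: your hope that ``since almost every slice generates the full $A$, the geometric data governing the truncation level are independent of $a$'' is unfounded. In the one-variable proof, $k_0$ comes from Lemma~6.4.5 of \cite{J. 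Noguchi and J. Winkelmann}: it is the jet order of a polynomial $R_0$ on the jet part $\C^{nk_0}$ vanishing on $\pi_2\big(J_{k_0}(\overline D;\log\partial A)\big)$ but not on the Zariski closure of the jet part of the lifted curve; the latter is a subvariety of $\C^{nk_0}$ that is \emph{not} determined by the Zariski density of $f_{\vec a}$ and can vary with $\vec a$, so $k_0(\vec a)$ is a priori unbounded over the sphere. No measure-theoretic pigeonhole rescues this, because the averaging identity requires one inequality valid for almost every direction simultaneously. The paper's resolution is concrete: fix one good direction $\vec a_0$, take the $R_0$ and $k_0$ produced by Lemma~6.4.5 for that single slice, and observe that $H(\vec a,z)=R_0\big(\tilde f_{\vec a}'(z),\dots,\tilde f_{\vec a}^{(k_0)}(z)\big)$ is holomorphic in $(\vec a,z)$ jointly and nonzero at some $(\vec a_0,z_0)$, so $\{\vec a: H(\vec a,z_0)=0\}$ is a proper analytic subset, of measure zero on $S_p(1)$; hence the \emph{same} $R_0$ (and therefore the same $k_0$) works for almost every slice. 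Some argument of this kind is indispensable and is missing from your proposal.

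Second, the error term. Your pre-Borel/calculus-lemma plan addresses only the $r$-exceptional sets, but the worse problem is the $\vec a$-dependence of the constants inside $S_{f_{\vec a}}(r,\overline D)$: applied to the slice $f_{\vec a}$, the one-variable theorem's error term contains additive quantities depending on $f_{\vec a}$ (in the paper's explicit version, $\log\frac{1}{|a_0(\vec a)|}$ and the origin data of the lift $\tilde f_{\vec a}$), and nothing in the statement of that theorem guarantees these are $\sigma_p$-integrable over $S_p(1)$. This is why the paper instead derives the proximity and counting estimates directly from the single jet differential $R_0$ via the identity $b_{j0}\sigma_j+b_{j1}d\sigma_j+\cdots+b_{jk_0}d^{k_0}\sigma_j=R_0$, so that every error term is explicitly either $\log\frac{1}{|a_0(\vec a)|}$ (whose sphere-integral is shown finite) or a logarithmic-derivative term, which after integration over $S_p(1)$ is controlled by Proposition~\ref{2.5}, Proposition~\ref{prop3.1} and the several-variable lemma on logarithmic derivatives (Theorem~A8.1.5 in \cite{book:minru}), yielding $S_f(r,\overline D)$. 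In short: you have the right strategy, but the two steps you either flag as ``to be checked'' or sketch schematically are exactly where the real content of the proof lies, and your proposal does not supply it.
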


The second part of this paper is to study meromorphic maps from ${\Bbb C}^{p}$ to  projective varieties, where $p\ge 1$ is an integer.  We first establish the following version of   the Second Main Theorem with truncation level one (see Theorem 1.4 in \cite{CaiRuYang2}). For related results, see also \cite{HVX}.
  \begin{theorem}\label{smtXie}
	Let $X$ be a smooth projective variety of dimension $n$.
	Let $D$ be a normal crossing divisor on $X$ and $A$ be an ample line bundle on $X$. Let  
	\begin{equation*}
	\mathcal P \in H^0(X,E^{GG}_{k,m}(T_X^*(\log D)) \otimes  A^{-\tilde m}  ).
	\end{equation*}
	Let $f:  {\Bbb C}^p \rightarrow X$ be a meromorphic map with $f({\Bbb C}^p) \not\subset \text{Supp}(D)$. Assume that  ${\mathcal P}\big(j_k(f_{\vec a})\big) \not \equiv 0$ for some $\vec{a}\in S_p(1)$. Then,
	\begin{equation*}
	\tilde mT_{f}(r,A) \leq mN^{(1)}_f(r, D)+S_{f}(r,A).
	\end{equation*}
	\end{theorem}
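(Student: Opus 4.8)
The plan is to reduce Theorem~\ref{smtXie} to a one-variable Second Main Theorem by Stoll's fiber-integration method and then to average over directions. For $\vec a\in S_p(1)$ write $f_{\vec a}(w):=f(w\vec a)$, $w\in\C$, for the restriction of $f$ to the complex line $\C\vec a$; since a meromorphic map of one variable into a projective variety is holomorphic, each $f_{\vec a}$ is (for generic $\vec a$) an entire curve $g:=f_{\vec a}\colon\C\to X$. The reduction rests on the averaging (Crofton-type) identities supplied by Stoll's method, which express the several-variable Nevanlinna functions as integrals over the sphere of the corresponding slice functions, e.g.\ $T_f(r,A)=\int_{S_p(1)}T_{f_{\vec a}}(r,A)\,d\sigma(\vec a)+O(1)$ and likewise for $N^{(1)}_f(r,D)$, with error terms absorbable into $S_f(r,A)$; these I take from the framework developed earlier in the paper.

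The core is the one-variable inequality, which I would prove for each direction with $\mathcal{P}(j_k g)\not\equiv0$. Evaluating $\mathcal{P}$ on the $k$-jet of $g$ and trivialising $(dw)^{m}$ by the standard coordinate yields a meromorphic section $s:=\mathcal{P}(j_k g)$ of $g^{*}A^{-\tilde m}$. Fixing a smooth Hermitian metric on $A$ and applying the Poincar\'e--Lelong (Jensen) formula to $s$ gives
\[
\tilde m\,T_g(r,A)=\tfrac{1}{2}\int_0^{2\pi}\log\|s(re^{i\theta})\|^{2}\,\tfrac{d\theta}{2\pi}-N(r,Z_s)+N(r,P_s)+O(1),
\]
where $Z_s$ and $P_s$ are the zero and pole divisors of $s$. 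The term $-N(r,Z_s)\le0$ is discarded, and the other two are estimated separately.

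For the pole divisor, the decisive structural point is that the coefficients of the logarithmic jet differential $\mathcal{P}$ are holomorphic in adapted local coordinates, so the only poles of $s$ lie over $D$. Writing $D=\{z_1\cdots z_l=0\}$, the logarithmic jet coordinate $(\log g_i)^{(j)}$ has a pole of order exactly $j$ at a point of $g^{-1}(D)$, \emph{independently of the intersection multiplicity}; hence any monomial of weighted degree $m$ acquires a pole of order at most $m$ there. This multiplicity-independent bound gives $N(r,P_s)\le m\,N^{(1)}_g(r,D)$, and is precisely what produces the truncation level one. For the proximity term, the same metric together with a smooth metric on the logarithmic $k$-jet bundle yields a pointwise bound $\|s\|\le C\,\|j_k g\|^{m}$; since with respect to such a metric the jet norm is controlled by the logarithmic derivatives of the components of $g$ (the model estimate $|g'|/(1+|g|^{2})\le|g'/g|$ on $\PP^1$ being the prototype), the Logarithmic Derivative Lemma gives $\tfrac{1}{2}\int_0^{2\pi}\log^{+}\|s\|^{2}\,\tfrac{d\theta}{2\pi}=S_g(r,A)$. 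Combining the three estimates yields the slice inequality $\tilde m\,T_g(r,A)\le m\,N^{(1)}_g(r,D)+S_g(r,A)$, which is the curve case of \cite{CaiRuYang2,HVX}.

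It remains to pass back to $f$. From the hypothesis $\mathcal{P}(j_k f_{\vec a_0})\not\equiv0$ for one $\vec a_0$, I would argue that the Taylor coefficients $c_n(\vec a):=\partial_w^n\big[\mathcal{P}(j_k f_{\vec a})\big](0)$ depend real-analytically on $\vec a$ and are not all identically zero, so that $\{\vec a:\mathcal{P}(j_k f_{\vec a})\equiv0\}$ lies in a proper real-analytic subset of $S_p(1)$ and hence has measure zero. Thus the slice inequality holds for almost every $\vec a$, and integrating it against $d\sigma$ over $S_p(1)$ and invoking the averaging identities produces $\tilde m\,T_f(r,A)\le m\,N^{(1)}_f(r,D)+S_f(r,A)$. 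I expect the main obstacle to be the analytic bookkeeping of this reduction rather than either individual estimate: one must check that the fiber-integration identities hold with errors uniformly absorbable into $S_f$, and---most delicately---that the Logarithmic Derivative Lemma applies with bounds integrable in $\vec a$, so that the individual small terms $S_{f_{\vec a}}(r,A)$ aggregate to a genuine $S_f(r,A)$ rather than merely to a term small for each fixed direction.
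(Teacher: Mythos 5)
Your overall strategy is the same as the paper's: slice $f$ along complex lines, run the standard jet-differential Jensen argument on each slice (your Poincar\'e--Lelong bookkeeping, with the multiplicity-independent pole bound $N(r,P_s)\le m\,N^{(1)}_{f_{\vec a}}(r,D)$ coming from the logarithmic poles, reproduces exactly the current inequality $dd^c\log\|\mathcal P(j_k(f_{\vec a}))\|^2_{h^{-1}}\ge \tilde m\,f_{\vec a}^*c_1(A)-m\,(f_{\vec a}^*D)^{(1)}$ that the paper imports from the curve case), and the measure-zero argument for the set of directions with $\mathcal P(j_k(f_{\vec a}))\equiv 0$ is also in line with what the paper does. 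The gap is in your last step, and it is exactly the point you flag but do not resolve: you propose to prove the full slice inequality $\tilde m\,T_{f_{\vec a}}(r,A)\le m\,N^{(1)}_{f_{\vec a}}(r,D)+S_{f_{\vec a}}(r,A)$ for almost every $\vec a$ (i.e.\ to apply the one-variable Logarithmic Derivative Lemma slice by slice) and then integrate over $S_p(1)$. This fails as stated: each $S_{f_{\vec a}}(r,A)$ comes with its own exceptional set $E_{\vec a}\subset(0,\infty)$ of finite measure and its own implicit constants, and the union $\bigcup_{\vec a}E_{\vec a}$ over uncountably many directions need not have finite measure, nor are the constants integrable in $\vec a$ without further argument. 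A family of inequalities each valid ``except on a small $r$-set depending on $\vec a$'' does not integrate to a single inequality valid except on a small $r$-set.

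The paper's proof avoids this by changing the order of operations. It does \emph{not} estimate the proximity term on each slice; it keeps the raw quantity $\int_0^{2\pi}\log\|\mathcal P(j_k(f_{\vec a}))(re^{i\theta})\|_{h^{-1}}\,\frac{d\theta}{2\pi}$ in the slice inequality, integrates that inequality over $S_p(1)$ (using Proposition \ref{2.5} to convert $\int_{S_p(1)}T_{f_{\vec a}}\,\sigma_p$ and $\int_{S_p(1)}N^{(1)}_{f_{\vec a}}\,\sigma_p$ into $T_f$ and $N^{(1)}_f$ up to $O(\log r)$), and only then applies Proposition \ref{prop3.1} to the resulting double integral. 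Proposition \ref{prop3.1} is a genuinely several-variables statement: via the chain rule the derivatives $f_{\vec a}^{(j)}(z)$ are expressed through partial derivatives of $f$ evaluated at $\vec a z$, Stoll's fiber-integration lemma (Lemma \ref{INTEGRATION}) rewrites the double integral over $S_p(1)\times[0,2\pi]$ as one integral over $S_p(r)$, and the several-variables logarithmic derivative lemma is applied \emph{once} to $f$ itself, producing a single small term $S_f(r,A)$ with a single exceptional set. To repair your proposal you would need to replace your per-slice use of the LDL by this ``integrate first, then apply the LDL to $f$'' step; without it, the aggregation you correctly identify as the main obstacle does not go through.
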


To apply Theorem~1.3 in the case $X=\mathbb{P}^{n}(\mathbb{C})$, where 
$D$ is a generic hypersurface of sufficiently high degree $d$ and 
$A=\mathcal{O}(1)$, following Siu’s strategy (see \cite{HCG}, \cite{Siu2015}), we combine the existence 
theorem for logarithmic jet differentials on $\mathbb{P}^{n}(\mathbb{C})$ vanishing on an ample divisor with the technique of slanted vector fields.

More precisely, let $S := \mathbb{P}H^{0}\!\left(\mathbb{P}^{n}(\mathbb{C}),\mathcal{O}(d)\right)$ be the projective parameter space of homogeneous polynomials of degree $d$ in $\mathbb{P}^{n}(\mathbb{C})$ and let $s\in S$ be the point corresponding to the above hypersurface $D$. We view $X=\mathbb{P}^{n}(\mathbb{C})$ as the fiber 
$\mathbb{P}^{n}(\mathbb{C})\times\{s\}\subset \mathbb{P}^{n}(\mathbb{C})\times S$. Let $\mathcal{P}_s$ be a fixed nonzero logarithmic jet differential 
on $\mathbb{P}^{n}(\mathbb{C})\times\{s\}$.  
By the semi-continuity theorem (\cite{AG}, Theorem~12.8), it extends 
to a holomorphic family $\mathcal{P}$ of nonzero logarithmic jet 
differentials parametrized by points of a Zariski open neighborhood 
$U_s$ of $s$ in $S$.
Applying slanted vector fields with low pole order to this holomorphic family then produces new 
jet differentials satisfying the hypotheses of Theorem~1.3. This leads to the following Second Main Theorem for 
meromorphic maps into projective space intersecting a generic hypersurface 
of sufficiently high degree (for details, see \cite{Siu2015}, \cite{HVX}, \cite{DMR}).

\begin{theorem}\label{GENERIC}
Let $D \subset \mathbb{P}^n(\mathbb{C})$ be a generic hypersurface having degree 
\[
d \geq 15 (5n + 1) n^n.
\]
Let $f : \mathbb{C}^p \to \mathbb{P}^n(\mathbb{C})$ be a meromorphic map.  
Assume that $f$ has the Zariski dense image. Then the following estimate holds:
\[
T_f(r) \leq N_f^{(1)}(r, D) + S_f(r).
\]
\end{theorem}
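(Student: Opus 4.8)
The plan is to deduce Theorem~\ref{GENERIC} as a specialization of Theorem~\ref{smtXie} to $X = \mathbb{P}^n(\mathbb{C})$, $A = \mathcal{O}(1)$, and $D$ a generic hypersurface of degree $d$, with the empty logarithmic structure replaced by the pole structure along $D$. The central task is to produce a nonzero logarithmic jet differential $\mathcal{P} \in H^0(X, E^{GG}_{k,m}(T_X^*(\log D)) \otimes A^{-\tilde m})$ with $\tilde m > 0$, together with the guarantee that $\mathcal{P}(j_k(f_{\vec a})) \not\equiv 0$ for some $\vec a \in S_p(1)$, since then Theorem~\ref{smtXie} immediately yields $\tilde m\, T_f(r, A) \leq m N^{(1)}_f(r, D) + S_f(r, A)$, and dividing by $\tilde m$ (absorbing the constant factor $m/\tilde m$ into the definition of the counting term via the truncation estimate, and noting $T_f(r) = T_f(r, \mathcal{O}(1))$ up to $O(1)$) gives the stated conclusion.

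First I would invoke the existence theorem for logarithmic jet differentials on $\mathbb{P}^n(\mathbb{C})$ along a smooth hypersurface of high degree, in the form established via Siu's strategy and refined by Darondeau and by Huynh-Vu-Xie: for $d \geq 15(5n+1)n^n$ there exists, for suitable $k$ and $m$, a nonzero global section of $E^{GG}_{k,m}(T_X^*(\log D)) \otimes A^{-\tilde m}$ with $\tilde m$ a definite positive fraction of $m$. This is precisely the numerical threshold quoted in the statement, and it is the reason the bound on $d$ takes this shape. I would then fix a generic $s \in S = \mathbb{P}H^0(\mathbb{P}^n(\mathbb{C}), \mathcal{O}(d))$ corresponding to $D$ and, by the semi-continuity argument already outlined before the theorem, spread $\mathcal{P}_s$ out to a holomorphic family $\mathcal{P}$ over a Zariski open $U_s \ni s$ in the universal family $\mathbb{P}^n(\mathbb{C}) \times S$.

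Next I would address the non-degeneracy hypothesis $\mathcal{P}(j_k(f_{\vec a})) \not\equiv 0$, which is where the slanted vector field technique enters. Because $f$ has Zariski dense image, no single jet differential can be guaranteed to be non-vanishing on the jets of every slice $f_{\vec a}$; the standard device is to apply a family of meromorphic vector fields on $\mathbb{P}^n(\mathbb{C}) \times S$ tangent to the fibers and with controlled (low) pole order along the universal hypersurface, which act on $\mathcal{P}$ to generate new jet differentials whose common zero locus is forced down to a proper subvariety not containing the image of a generic slice. The low pole order is essential: it ensures the twist $A^{-\tilde m}$ is only mildly degraded, so that after differentiation one still has a section valued in a negative power of $A$, keeping $\tilde m > 0$ in the final inequality. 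I would combine this with Bloch's theorem (Theorem~\ref{Bloch}) and the density assumption to rule out the image of $f$, equivalently of its generic slices $f_{\vec a}$ for $\vec a \in S_p(1)$, lying in the base locus.

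The main obstacle is the slanted vector field step together with the reduction from $p$ variables to one. For a single entire curve the slanted vector field argument is classical, but here $f$ is a meromorphic map from $\mathbb{C}^p$, and the hypotheses of Theorem~\ref{smtXie} are stated in terms of the one-variable restrictions $f_{\vec a}$ obtained by Stoll's fiber-integration reduction over $\vec a \in S_p(1)$. I must verify that Zariski density of $f(\mathbb{C}^p)$ passes to Zariski non-degeneracy of $f_{\vec a}$ for a generic direction $\vec a$, so that the globally generated jet differentials produced by the slanted vector fields do not vanish identically along $j_k(f_{\vec a})$; this genericity-of-slices statement, ensuring that the exceptional set where $\mathcal{P}(j_k(f_{\vec a})) \equiv 0$ is avoided by some admissible $\vec a$, is the delicate point. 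Once this is in place, the inequality of Theorem~\ref{smtXie} applies to that $\vec a$, and collecting the constants and passing to $T_f(r) = T_f(r, \mathcal{O}(1))$ completes the proof.
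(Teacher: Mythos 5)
Your overall skeleton (specialize Theorem~\ref{smtXie} to $X=\mathbb{P}^n(\mathbb{C})$, $A=\mathcal{O}(1)$; produce a logarithmic jet differential via Siu's strategy; spread it out by semi-continuity; apply slanted vector fields) matches the paper's, but the step you yourself flag as ``the delicate point'' is not merely delicate: it is false as stated, and the paper's proof is specifically engineered to avoid it. Zariski density of $f(\mathbb{C}^p)$ does \emph{not} imply Zariski non-degeneracy of the generic slice $f_{\vec a}$. For example, $f(z_1,z_2)=[1:z_1:z_2]$ is a Zariski dense (open) embedding of $\mathbb{C}^2$ into $\mathbb{P}^2(\mathbb{C})$, yet every slice $f_{\vec a}(z)=[1:a_1z:a_2z]$ parametrizes a line. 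The countability argument that makes slices inherit non-degeneracy for semi-abelian targets (Lemma~\ref{zero measure} combined with the fact that a semi-torus contains only countably many sub-semi-tori) has no analogue for $\mathbb{P}^n(\mathbb{C})$, which contains uncountably many proper subvarieties; for the same reason your appeal to Bloch's theorem (Theorem~\ref{Bloch}) is out of place, since that theorem concerns maps into semi-abelian varieties and plays no role in the projective case.

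What the paper actually proves (Proposition~\ref{jet exsits}) is much weaker than slice non-degeneracy and suffices: the non-degeneracy of the $p$-variable map $f$ is used only to select a single point $z_0$ with $f(z_0)\notin \{\mathcal{P}_s=0\}\cup D$ and with nonvanishing radial derivative $\frac{\partial f}{\partial r}(z_0)\neq 0$. Setting $\vec{a}_0=z_0/\|z_0\|$, the slice $f_{\vec{a}_0}$ then has a \emph{regular} jet at a point lying outside $D$ and outside the base locus of the family $\mathcal{P}$, and the Huynh--Vu--Xie slanted vector field argument is applied \emph{pointwise} at that jet: it produces $v_1,\dots,v_\ell$ (with $0\le\ell\le m$) such that $(v_1\cdots v_\ell\mathcal{P})\big(j_n(f_{\vec{a}_0})\big)$ is nonzero at that one point, hence $\not\equiv 0$ as a function on $\mathbb{C}$. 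The passage to almost every $\vec a\in S_p(1)$ is then the holomorphic-dependence/measure-zero argument from the proof of Theorem~\ref{SEMIABELIAN}, not any density statement about slices. Separately, your numerics are too weak: keeping $\tilde m>0$ after the vector fields degrade the twist only gives $T_f(r)\le (m/\tilde m)\,N_f^{(1)}(r,D)+S_f(r)$; to obtain the stated coefficient $1$ one needs $\tilde m\ge m$, which is exactly why the paper takes $c=5n-1$ and $\ell\le m$, so that $\tilde m=mc-\ell(5n-2)\ge m(c-5n+2)=m$, forcing the threshold $d\ge 15(c+2)n^n=15(5n+1)n^n$.
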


Note that in the above theorem, $f$ is assumed to be algebraically non-degenerate (i.e. $f$ has the Zariski dense image). 
However,  according to  Riedl and Yang \cite{RY},  to derive the hyperbolicity of the complement of  generic hypersurfaces  in ${\Bbb P}^n(\C)$, it can be reduced to studying 
the algebraic degeneracy of the image of  $f$.  For the progress on the hyperbolicity of the complement of  generic hypersurfaces  in ${\Bbb P}^n(\C)$, see \cite{Siu2015}, \cite{Brotbek}, \cite{BD}, and \cite{DMR}.

\section{Notations and preparations}
We recall some notations and definitions. Let $p\ge 1$ be an integer.
For $z = (z_1, \ldots, z_p) \in \mathbb{C}^p$, we define
$
|z| = \bigl(|z_1|^2 + \cdots + |z_p|^2\bigr)^{1/2}.
$
We denote by
$
B_p(r) = \{ z \in \mathbb{C}^p \mid |z| < r \}
$
the open ball in $\mathbb{C}^p$ of radius $r$ centered at the origin, and
$
S_p(r) = \{ z \in \mathbb{C}^p \mid |z| = r \}
$
the boundary of $B_p(r)$.  Let $\iota : S_p(r) \hookrightarrow \mathbb{C}^p$ be the natural inclusion.  
Then the pull-back of the form
\[
\sigma_p = d^c \log |z|^2 \wedge (dd^c \log |z|^2)^{p-1}
\]
on $\mathbb{C}^p \setminus \{0\}$ gives a positive measure on $S_p(r)$ with total measure $1$, i.e.
$
\int_{S_p(r)} \iota^* \sigma_p = 1.
$ We define
$
\nu_p(z) = dd^c |z|^2  \text{  on  } \mathbb{C}^p$.
It  defines a Lebesgue measure on $\mathbb{C}^p$ such that $B_p(r)$ has measure $r^{2p}$.

We now recall the standard notations in Nevanlinna theory (see \cite{book:minru}, \cite{book:minru2}).  
 Let $X$ be a smooth projective variety of dimension $n$ and $D$ be an effective Cartier divisor on $X$.  Denote by $[D]$ the line bundle associated to $D$, and  fix  a Hermitian metric on $[D]$.
 Let $f$ be a meromorphic  map of\ $\mathbb{C}^p$\ into\  $X$, with $f(\mathbb{C}^p)\not\subset \text{Supp}D$.
 
Fix $s>0$. We define the \emph{counting function} of $f$ with respect to $D$  by
\[
N_f(r,s,D) = \int_s^r \frac{dt}{t^{2p-1}} \int_{B_p(t)\cap f^*D} v_p^{p-1}
.\]

Let $f^*D= \sum_{\lambda} \, k_\lambda D_\lambda$ be its irreducible decomposition.
For \(1 \le k \le \infty\), denote $(f^*D)^{(k)}= \sum_{\lambda} \, \min\{k, k_\lambda\} D_\lambda$, the \emph {truncated counting function of $f$ to level \(k\)} with respect to $D$ is defined by
\[
N^{(k)}_f(r,s,D) = \int_s^r \frac{dt}{t^{2p-1}} \int_{B_p(t)\cap (f^*D)^{(k)}} v_p^{p-1}
.\]
Note that $N^{(\infty)}_f(r,s,D)=N_f(r,s,D)$.

The \emph{Weil function} on $X$ with respect to $D$ is defined by
\[
\lambda_D(x) = - \log \| s_D(x) \|,
\]
where $s_D$ is the canonical section of $[D]$. The \emph{proximity function} of $f$ with respect to $D$ is defined by
\[
m_f(r,s,D) = \int_{S_p(r)} \lambda_D(f)~ \sigma_p 
            - \int_{S_p(s)} \lambda_D(f)~\sigma_p. 
\]

The \emph{characteristic function} of $f$ with respect to $D$ is defined by
\[
T_f(r,s,D) = \int_s^r \frac{dt}{t^{2p-1}} \int_{B_p(t)} f^* c_1[D] \wedge v_p^{p-1}.
\]

Note that for \(1 \le k \le \infty\), $T_{f}(r,s,D) $, $m_f(r,s,D)$, $\text{and }N^{(k)}_f(r,s,D)$ only depend on  $s$ up to a bounded  term, so sometimes we denote them as  $T_{f}(r, D) $, $m_f(r, D)$, and $N^{(k)}_f(r,s,D)$ when $s=1$. 

As is customary in Nevanlinna theory, we use the symbol $S_f(r,D)$ for a non-negative small term such that
\[
S_f(r,D)\leq_{exc} O(\log^+T_f(r,D))+O(\log r)+O(1).
\]
where $\leq_{exc}$ stands for the validity of the inequality except for $r$ in exceptional intervals with finite measure.

If $X=\mathbb{P}^n(\mathbb{C})$ and $H\subset \mathbb{P}^n(\mathbb{C})$ is a hyperplane, we write
$
T_f(r,H)=T_f(r),  ~~S_f(r,H)=S_f(r)
$
for simplicity. When $n=1$, we also denote $N_f(r,a)$ by $N(f,r,a)$ for $a \in \mathbb{P}(\mathbb{C})$.

With the above notation, we state the following theorem which is one of the fundamental theorems in Nevanlinna theory.
\begin{theorem}[First Main Theorem]
Let $X$ be a complex projective variety, and let $D$ be an effective Cartier divisor.
Let $f : \mathbb{C}^p \to X$ be meromorphic such that $f(\mathbb{C}^p) \not\subset \operatorname{Supp}(D)$.
Then
\[
T_{f}(r,s,D) = m_f(r,s,D) + N_f(r,s,D).
\]
\end{theorem}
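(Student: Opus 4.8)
The plan is to deduce the First Main Theorem from two ingredients: the Poincar\'e--Lelong formula pulled back through $f$, and the several-variable Jensen formula of Stoll, which converts a ball integral of $dd^c$ of a function (wedged with $v_p^{p-1}$) into the difference of its sphere averages against $\sigma_p$.

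First I would reduce to the locus where $f$ is holomorphic. As a meromorphic map, $f$ is holomorphic off an analytic set $I_f\subset\mathbb{C}^p$ of codimension $\ge 2$; since $\sigma_p$ and the ball integrals against $v_p^{p-1}$ do not charge sets of real codimension $\ge 2$ (in particular $I_f$ and the singular locus of $f^*D$), it is enough to compute on $\mathbb{C}^p\setminus I_f$ and extend. There the canonical section $s_D$ of $[D]$ pulls back to a holomorphic section $f^*s_D$ of $f^*[D]$ whose divisor is exactly $f^*D$; this is legitimate precisely because the hypothesis $f(\mathbb{C}^p)\not\subset\operatorname{Supp}(D)$ forces $f^*s_D\not\equiv 0$. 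The pulled-back Hermitian metric satisfies $\|f^*s_D\|=\|s_D\|\circ f$, so $\lambda_D(f)=-\log\|s_D\circ f\|$ is locally a difference of plurisubharmonic functions, with logarithmic poles precisely along $f^*D$.

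With the normalizations of $dd^c$ and of $c_1[D]$ fixed as in Section~2, the Poincar\'e--Lelong formula for the section $f^*s_D$ gives the current identity
\[
dd^c \lambda_D(f) = f^*c_1([D]) - [f^*D],
\]
where $[f^*D]$ denotes the integration current of the divisor $f^*D$ (not the associated line bundle). Wedging with $v_p^{p-1}$, applying $\int_s^r t^{-(2p-1)}\,dt\int_{B_p(t)}(\,\cdot\,)$, and invoking the Jensen formula
\[
\int_{S_p(r)}\psi\,\sigma_p-\int_{S_p(s)}\psi\,\sigma_p=\int_s^r\frac{dt}{t^{2p-1}}\int_{B_p(t)}dd^c\psi\wedge v_p^{p-1}
\]
with $\psi=\lambda_D(f)$, the left-hand side is exactly $m_f(r,s,D)$, while the right-hand side splits, directly from the definitions of the characteristic and counting functions, as $T_f(r,s,D)-N_f(r,s,D)$. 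Rearranging yields $T_f(r,s,D)=m_f(r,s,D)+N_f(r,s,D)$.

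The main obstacle is the rigorous justification of the Jensen formula for the non-smooth potential $\lambda_D(f)$, which has logarithmic singularities along the $(p-1)$-dimensional set $f^*D$ and possibly along $I_f$, so that a naive application of Stokes' theorem is not available. The correct framework is Stoll's calculus of $dd^c$ on quasi-plurisubharmonic functions: one excises shrinking tubular neighborhoods of $f^*D$ and $I_f$, applies Stokes on the complement, checks that the logarithmic singularities are integrable against both $\sigma_p$ and $v_p^{p-1}$, and verifies that the boundary contributions of the excised tubes vanish in the limit, the residue along $f^*D$ producing exactly the integration current $[f^*D]$. Once this regularization is in place, matching the three resulting terms with their definitions is routine, the only genuine care being to carry the single normalization constant that ties together $dd^c\log\|\cdot\|$, the Chern form $c_1[D]$, and the measure $\sigma_p$.
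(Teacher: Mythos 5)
The paper does not actually prove this statement: it records the First Main Theorem as a known fundamental result and points to the references \cite{book:minru}, \cite{book:minru2}, so there is no in-paper argument to compare against. Your proposal is precisely the standard proof found in those references --- Poincar\'e--Lelong applied to $f^*s_D$, wedged with $v_p^{p-1}$ and converted by Stoll's Green--Jensen formula into the identity $m_f(r,s,D)=T_f(r,s,D)-N_f(r,s,D)$ --- and it is correct, including the two points that genuinely need care: the indeterminacy locus $I_f$ (codimension $\ge 2$, hence negligible for all three integrals) and the regularization of the Jensen formula for the singular potential $\lambda_D(f)$, together with the normalization constant linking $dd^c\log\|\cdot\|$, $c_1[D]$, and $\sigma_p$.
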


Let $\vec{a}\in \mathbb{C}^p$ with $|\vec{a}| = 1$ and define the map $\iota_{\vec{a}} : \mathbb{C} \to \mathbb{C}^p$ by 
$\iota_{\vec{a}}(z) = z \vec{a}.$
It is easy to see that for almost all $\vec{a}\in S_p(1)$, $f_{\vec{a}} := f \circ \iota_{\vec{a}} : \mathbb{C} \to X$ is holomorphic. 
\begin{lemma}\label{zero measure}
Let $X$ be a smooth projective variety of dimension $n$, and let $D$ be an effective Cartier divisor on $X$. 
Suppose that $f : \mathbb{C}^p \to X$ is a meromorphic map such that 
\[
f(\mathbb{C}^p) \not\subset \operatorname{Supp}(D).
\]
Then, for almost all $\vec{a} \in S_p(1)$, the restriction map $f_{\vec{a}}: \mathbb{C} \to X$
also satisfies 
\[
f_{\vec{a}}(\mathbb{C}) \not\subset \operatorname{Supp}(D).
\]
\end{lemma}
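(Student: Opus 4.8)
The plan is to reduce the statement to an elementary fact in analytic geometry: a generic complex line through the origin of $\mathbb{C}^p$ is not contained in a fixed proper analytic subset. First I would let $U \subseteq \mathbb{C}^p$ be the domain of holomorphy of $f$, so that the indeterminacy locus $I := \mathbb{C}^p \setminus U$ is analytic of codimension at least $2$, and put $Z := \{z \in U : f(z) \in \operatorname{Supp}(D)\}$. Since $f(\mathbb{C}^p) \not\subset \operatorname{Supp}(D)$, the set $Z$ is a proper analytic subset of the connected set $U$, whence $\dim_{\mathbb{C}} Z \le p-1$. Let $W \subseteq \mathbb{C}^p$ be a proper analytic subset of dimension at most $p-1$ containing $Z \cup I$ (one extends the closure of $Z$ across $I$ via Remmert--Stein, using $\dim_{\mathbb{C}} I \le p-2$). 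In particular $W$ is closed and, having complex dimension $\le p-1$, is a Lebesgue-null set in $\mathbb{C}^p \cong \mathbb{R}^{2p}$.

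Next I would observe that it suffices to prove that, for almost every $\vec{a} \in S_p(1)$, the complex line $L_{\vec{a}} := \mathbb{C}\,\vec{a}$ is not contained in $W$. Indeed, if $L_{\vec{a}} \not\subseteq W \supseteq I$, then $L_{\vec{a}} \cap I$ is a proper analytic, hence discrete, subset of $L_{\vec{a}}$, so $f_{\vec{a}} = f \circ \iota_{\vec{a}}$ has only isolated indeterminacies and therefore extends to a holomorphic map $\mathbb{C} \to X$. Moreover the set $\{z \in \mathbb{C} : f_{\vec{a}}(z) \in \operatorname{Supp}(D)\}$ is contained in $\{z \in \mathbb{C} : z\vec{a} \in W\}$, the preimage of $W$ under the holomorphic map $z \mapsto z\vec{a}$, which is a proper, hence discrete, analytic subset of $\mathbb{C}$. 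Consequently $f_{\vec{a}}(\mathbb{C}) \not\subset \operatorname{Supp}(D)$, as required.

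It then remains to bound the bad set $B := \{\vec{a} \in S_p(1) : L_{\vec{a}} \subseteq W\}$. Since $W$ is closed, $B$ is the intersection over a countable dense set of $z$ of the closed sets $\{\vec{a} : z\vec{a} \in W\}$, hence is Borel. Suppose for contradiction that $B$ has positive measure. Consider the cone $C(B) := \{z\vec{a} : z \in \mathbb{C}^{*},\ \vec{a} \in B\}$, which by the definition of $B$ is contained in $W$. Passing to polar coordinates $\mathbb{C}^p \setminus \{0\} \cong (0,\infty) \times S_p(1)$, in which Lebesgue measure is $c\,t^{2p-1}\,dt\,d\omega$, I note that the radial slice $C(B) \cap S_p(1) = \{e^{i\theta}\vec{a} : \theta \in \mathbb{R},\ \vec{a} \in B\}$ contains $B$ and hence has positive surface measure (as $\sigma_p$ and $d\omega$ are mutually absolutely continuous). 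Integrating over $t \in (1,2)$ shows that $C(B)$ has positive Lebesgue measure, contradicting $C(B) \subseteq W$ together with the fact that $W$ is Lebesgue-null. Therefore $B$ is $\sigma_p$-null, which completes the proof.

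The main obstacle is this last step, namely converting the pointwise hypothesis $f(\mathbb{C}^p) \not\subset \operatorname{Supp}(D)$ into a statement valid for almost every one-dimensional slice; the cone/polar-coordinate estimate is the crux. Equivalently, one may argue that the image of $B$ in $\mathbb{P}^{p-1}$ is contained in the projectivization of the largest cone inside $W$, which is a proper analytic subset of $\mathbb{P}^{p-1}$ and hence null, and pull this back along the circle bundle $S_p(1) \to \mathbb{P}^{p-1}$. Some care is also needed to guarantee that $W$ is genuinely analytic (via Remmert--Stein) so that the null-set property of analytic subsets of positive codimension is available, and to verify that the restriction $f_{\vec{a}}$ is honestly holomorphic off a discrete set.
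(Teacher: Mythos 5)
Your proof is correct and follows essentially the same route as the paper: both arguments hinge on the observation that a positive-measure set of bad directions $\vec a$ would sweep out a positive-measure cone inside a Lebesgue-null analytic set (you use the cone $C(B)\subseteq W$ directly in $\mathbb{C}^p$, while the paper uses the cone $\mathbb{C}K\times\mathbb{C}$ inside $\tilde f^{-1}(\operatorname{Supp}(D))$ for the auxiliary map $\tilde f(v,z)=f(zv)$ on $\mathbb{C}^p\times\mathbb{C}$). Your explicit handling of the indeterminacy locus via Remmert--Stein and of the holomorphy of the slices $f_{\vec a}$ is a careful elaboration of points the paper passes over quickly, not a different method.
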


\begin{proof}
  Consider the meromorphic map
\[
\tilde{f} : \mathbb{C}^p \times \mathbb{C} \to X, 
\quad \tilde{f}(v,z) = f(zv).
\]
Then $\tilde{f}(\mathbb{C}^p \times \mathbb{C}) \not\subset \operatorname{Supp}(D)$ and $\tilde{f}^{-1}(\operatorname{Supp}(D))$ is a proper analytic subvariety of a Zariski open subset of $\mathbb{C}^p \times \mathbb{C}$.
Let $ K=\left\{\vec a\in S_p(1)\;\middle|\;
f_{\vec a}(\mathbb C)\subseteq\operatorname{Supp}(D)
\right\}\subseteq S_p(1).
$ We claim that $
m_{S_p(1)}(K) = 0
$, where $m_{S_p(1)}$ is the induced Lebesgue measure from the standard Lebesgue measure $m$ of $\mathbb{C}^p$. 
Indeed, assume $m_{S_p(1)}(K) > 0$ and denote by $\mathbb{C} K$ the set $\left\{a v : a \in \C,\ v \in K \right\}$, then 
$m_{\mathbb{C}^p \times \mathbb{C}}(\mathbb{C} K \times \mathbb{C}) = +\infty$. On the other hand, since
$\mathbb{C} K \times \mathbb{C} \subset \tilde{f}^{-1}(\text{Supp}D)$,
we get 
$
m_{\mathbb{C}^p \times \mathbb{C}} (\mathbb{C} K \times \mathbb{C})
   \le m_{\mathbb{C}^p \times \mathbb{C}} (\tilde{f}^{-1}(\text{Supp}D)) = 0
$.
This is a contradiction.
\end{proof}
From the above Lemma, we know that
$
m_{f_{\vec{a}}}(r,s,D), 
N_{f_{\vec{a}}}(r,s,D), \text{~~and~~} N^{(k)}_{f_{\vec{a}}}(r,s,D)
$
are well defined for $1 \le k \le \infty$, for almost all $\vec{a} \in S_p(1)$.
\begin{lemma}[\cite{W. Stoll}]\label{INTEGRATION}
Let $r>0$ and let $F \in L^1(S_{p}(r))$, then
\[
\int_{S_{p}(r)} F\sigma_p 
= \int_{S_{p}(1)} \left( \int_0^{2\pi} F(re^{i\theta}\vec{a})\,\frac{d\theta}{2\pi} \right) \sigma_p(\vec{a}).
\]  
\end{lemma}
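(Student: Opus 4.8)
The plan is to exploit the fact that the form $\sigma_p = d^c\log|z|^2 \wedge (dd^c\log|z|^2)^{p-1}$ is invariant under the two group actions on $\mathbb{C}^p\setminus\{0\}$ that are built into the right-hand side: the real scalings $z\mapsto \lambda z$ with $\lambda>0$, and the circle action $z\mapsto e^{i\theta}z$. Since $\log|\lambda e^{i\theta} z|^2 = \log|z|^2 + 2\log\lambda$, both $d^c\log|z|^2$ and $dd^c\log|z|^2$ are unchanged by these maps, so $\sigma_p$ is invariant under all of them. This single observation is the engine of the whole argument.

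First I would reduce to the case $r = 1$. Writing $m_r\colon S_p(1)\to S_p(r)$, $\vec a\mapsto r\vec a$, scale invariance gives $m_r^*\iota_r^*\sigma_p = \iota_1^*\sigma_p$, and since $m_r$ is an orientation-preserving diffeomorphism the change of variables formula yields
\[
\int_{S_p(r)} F\,\sigma_p = \int_{S_p(1)} (F\circ m_r)\,\sigma_p.
\]
Setting $G(\vec a):=F(r\vec a)$, so that $F(re^{i\theta}\vec a) = G(e^{i\theta}\vec a)$, the identity to be proved becomes the $r=1$ statement
\[
\int_{S_p(1)} G\,\sigma_p = \int_{S_p(1)}\Bigl(\int_0^{2\pi} G(e^{i\theta}\vec a)\,\tfrac{d\theta}{2\pi}\Bigr)\sigma_p(\vec a),
\]
for an arbitrary $G\in L^1(S_p(1))$.

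Next I would fix $\theta$ and use the circle invariance. The rotation $R_\theta\colon \vec a\mapsto e^{i\theta}\vec a$ is an orientation-preserving diffeomorphism of $S_p(1)$ with $R_\theta^*\sigma_p = \sigma_p$, whence $\int_{S_p(1)} (G\circ R_\theta)\,\sigma_p = \int_{S_p(1)} G\,\sigma_p$ for every $\theta$. Integrating this equality against $\tfrac{d\theta}{2\pi}$ over $[0,2\pi]$ and interchanging the order of integration produces exactly the right-hand side displayed above, which completes the proof.

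The only genuinely technical point is the justification of the interchange of integrals (Fubini--Tonelli): one must check that $(\vec a,\theta)\mapsto G(e^{i\theta}\vec a)$ is jointly measurable and that the hypothesis $F\in L^1(S_p(r))$, equivalently $G\in L^1(S_p(1))$, makes it integrable on the product $S_p(1)\times[0,2\pi]$ with respect to $\sigma_p\otimes \tfrac{d\theta}{2\pi}$. Since the circle action is smooth and preserves the finite measure $\sigma_p$, joint measurability is automatic and the product integrand is absolutely integrable, so Fubini applies; the remaining steps are the routine pullback bookkeeping indicated above. I expect this measure-theoretic verification, rather than any geometric difficulty, to be the main thing requiring care.
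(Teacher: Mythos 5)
Your proof is correct; note, though, that there is no internal argument to compare it against --- the paper does not prove this lemma, but quotes it as a known result from Stoll's monograph \cite{W. Stoll}. Your derivation is a sound, self-contained substitute, and each step checks out: $\sigma_p$ is invariant under $z\mapsto\lambda e^{i\theta}z$ because the pullback of $\log|z|^2$ changes only by the additive constant $2\log\lambda$, which $d$ and $d^c$ annihilate; the scaling $m_r$ and the rotations $R_\theta$ are orientation-preserving diffeomorphisms (the latter lie in $U(p)$), so the change-of-variables identities hold; and the interchange of the $\theta$- and $\vec a$-integrations is justified because Tonelli applied to $|G|$, together with rotation invariance, gives
\[
\int_0^{2\pi}\int_{S_p(1)}\bigl|G(e^{i\theta}\vec a)\bigr|\,\sigma_p(\vec a)\,\frac{d\theta}{2\pi}
=\int_{S_p(1)}|G|\,\sigma_p<\infty .
\]
The only point I would tighten is the measurability remark: for $G$ merely in $L^1$ you should fix a Borel representative and note that the smooth map $(\vec a,\theta)\mapsto e^{i\theta}\vec a$ is a submersion, so preimages of null sets are null; hence $G(e^{i\theta}\vec a)$ is well defined almost everywhere on the product and jointly measurable. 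Calling this ``automatic'' is a little quick, but the repair is routine. Conceptually, your invariance-plus-Fubini argument is precisely the disintegration of $\sigma_p$ along the circle fibration of the sphere, i.e.\ the fiber-integration statement that Stoll's lemma encodes, so you have not merely cited less --- you have reproduced the substance of the result by a more elementary group-averaging route.
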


Let $D \subset \C^n$ be an irreducible reduced divisor, defined by a holomorphic function $h$,  
i.e.\ $\{ h = 0 \} = D$.

Then $D = D_{\mathrm{reg}} \cup D_{\mathrm{sing}}$ with $\dim D_{\mathrm{sing}} \leq n-2$.

Consider $\pi : \C^n \setminus \{0\} \to \PP^{n-1}(\C)$ defined by $x \mapsto [x]$.  
Then $\dim \pi(D_{\mathrm{sing}} \setminus \{0\}) \leq n-2$, hence $\pi(D_{\mathrm{sing}} \setminus \{0\})$ has zero measure in $\PP^{n-1}(\C)$.

Now denote ${D_{\mathrm{reg}} \setminus \{0\}}=D_{\mathrm{reg}}^*$ and consider $\pi|_{D_{\mathrm{reg}}^* } : D_{\mathrm{reg}}^*\to \PP^{n-1}(\C)$.  
Note that $D_{\mathrm{reg}}^*$ is a complex manifold in $\C^n \setminus \{0\}$ of dimension $n-1$. 
\begin{lemma}
Let $v \in D_{\mathrm{reg}}^* \subset \C^n \setminus \{0\}$.  
Then $(d\pi|_{D_{\mathrm{reg}}^*})_{v}$ is isomorphic  
if and only if $T_v^{1,0}({D_{\mathrm{reg}}^* }) \oplus \C\{v\} = \C^n$.
\end{lemma}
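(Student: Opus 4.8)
The plan is to reduce the claim to an elementary kernel computation for the ambient projection $\pi$, followed by a dimension count. The key observation will be that $\pi:\C^n\setminus\{0\}\to\PP^{n-1}(\C)$ is a holomorphic submersion whose fibers are the punctured lines through the origin, and that at each point $v$ the kernel of the ambient differential $(d\pi)_v:\C^n\to T^{1,0}_{[v]}\PP^{n-1}(\C)$ is exactly the line $\C\{v\}$.

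First I would establish this kernel computation. Since $\pi(\lambda v)=[v]$ is independent of $\lambda$, differentiating the curve $\lambda\mapsto\lambda v$ at $\lambda=1$ gives $(d\pi)_v(v)=0$, so $\C\{v\}\subseteq\ker (d\pi)_v$. Because $\pi$ is a submersion onto an $(n-1)$-dimensional manifold, $\dim\ker(d\pi)_v = n-(n-1)=1$, whence $\ker(d\pi)_v=\C\{v\}$. Alternatively, one may verify this directly in an affine chart $\{x_i\neq 0\}$, where the vanishing $(d\pi)_v(\xi)=0$ reads $v_i\xi_j=v_j\xi_i$ for all $j\neq i$, forcing $\xi$ to be a scalar multiple of $v$.

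Next I would restrict to $D_{\mathrm{reg}}^*$. By definition $(d\pi|_{D_{\mathrm{reg}}^*})_v$ is the restriction of $(d\pi)_v$ to the subspace $T^{1,0}_v(D_{\mathrm{reg}}^*)\subset\C^n$, so its kernel is
\[
\ker (d\pi|_{D_{\mathrm{reg}}^*})_v = T^{1,0}_v(D_{\mathrm{reg}}^*)\cap\ker(d\pi)_v = T^{1,0}_v(D_{\mathrm{reg}}^*)\cap\C\{v\}.
\]
Both the source $T^{1,0}_v(D_{\mathrm{reg}}^*)$ and the target $T^{1,0}_{[v]}\PP^{n-1}(\C)$ have complex dimension $n-1$, so the linear map $(d\pi|_{D_{\mathrm{reg}}^*})_v$ is an isomorphism if and only if it is injective, that is, if and only if $T^{1,0}_v(D_{\mathrm{reg}}^*)\cap\C\{v\}=\{0\}$.

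Finally, a dimension count closes the argument: since $\dim T^{1,0}_v(D_{\mathrm{reg}}^*)=n-1$ and $\dim\C\{v\}=1$ with $(n-1)+1=n=\dim\C^n$, the intersection $T^{1,0}_v(D_{\mathrm{reg}}^*)\cap\C\{v\}$ is trivial precisely when the sum is direct and fills the ambient space, i.e.\ $T^{1,0}_v(D_{\mathrm{reg}}^*)\oplus\C\{v\}=\C^n$. Combining this equivalence with the previous one yields the stated criterion. The only substantive ingredient is the identification $\ker(d\pi)_v=\C\{v\}$, and that is routine; everything else is linear algebra.
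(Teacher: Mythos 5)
Your proof is correct and follows essentially the same route as the paper's: both arguments rest on the identification $\ker(d\pi)_v = \C\{v\}$ together with elementary linear algebra relating injectivity of the restricted differential to the direct-sum condition $T_v^{1,0}(D_{\mathrm{reg}}^*)\oplus\C\{v\}=\C^n$. If anything, yours is slightly more complete, since you actually justify the kernel identification (via the submersion property, or the chart computation) that the paper simply asserts, and you make the dimension count explicit where the paper invokes the line-versus-hyperplane dichotomy.
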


\begin{proof}
$T_v^{1,0}({D_{\mathrm{reg}}^* })$ and $\C\{v\}$ either satisfy
\[
T_v^{1,0}({D_{\mathrm{reg}}^* }) \oplus \C\{v\} = \C^n,
\quad\text{or}\quad
\C\{v\} \subseteq T_v^{1,0}({D_{\mathrm{reg}}^* }).
\]

Therefore, if we have $\C\{v\} \subseteq T_v^{1,0}({D_{\mathrm{reg}}^* })$ and because $\ker(d\pi)_v = \C\{v\}$, we get $\C\{v\} = \ker(d\pi|_{D_{\mathrm{reg}}^*})_v$.  
Hence $(d\pi)_{D_{\mathrm{reg}}^*}$ is not an isomorphism.

Conversely, if $T_v^{1,0}({D_{\mathrm{reg}}^* }) \oplus \C\{v\} = \C^n$, then again by $\ker(d\pi)_v = \C\{v\}$, we conclude that $(d\pi|_{D_{\mathrm{reg}}^*})_v$ is isomorphic.
\end{proof}

By Sard’s theorem, the set of critical values of $\pi|_{D_{\mathrm{reg}}^*}$ has zero measure in $\PP^{n-1}(\C)$.  
Denote this set by $K$. Then for all $y \in \PP^{n-1}(\C) \setminus (K \cup \pi(D_{\mathrm{sing}} \setminus \{0\}))$, set $(\pi|_{D_{\mathrm{reg}}^*})^{-1}(y) = \{v_1,v_2,\dots\}.$ We have $T_{v_i}^{1,0}({D_{\mathrm{reg}}^* }) \oplus \C\{v\} = \C^n \text{ for } i=1,2,\dots $

Suppose  $T_v^{1,0}({D_{\mathrm{reg}}^* }) \oplus \C\{v\} = \C^n$, then the intersection number of $\C\{v\}$ and $D_{\mathrm{reg}}^*$ at $v$ is $1$, i.e. $i_v(\C\{v\}, D_{\mathrm{reg}}^*) = 1.$
Indeed, let $v \in D_{\mathrm{reg}}^* \subset \C^n \setminus \{0\}$,  
then there exists $h \in \mathcal{O}_{\C^n,v}$ such that $(Z(h))_v = (D_{\mathrm{reg}}^*)_v$ and $\nabla h(v) \neq 0$.
The intersection number is given by $i_v(\C\{v\}, D_{\mathrm{reg}}^*) = \operatorname{ord}_1 h_v(z),$ where $h_v(z) := h(zv)$. We have ${h^{\prime}_v}(1) = v \cdot \nabla h(v)$, and assume ${h^{\prime}_v}(1) = 0$, by
$T_v^{1,0}({D_{\mathrm{reg}}^* }) = \{x \in \C^n \mid x \cdot \nabla h(v) = 0\}.$  We have $v \in T_v^{1,0}({D_{\mathrm{reg}}^* })$, which implies $\C\{v\} \subseteq T_v^{1,0}({D_{\mathrm{reg}}^* })$.  
But this contradicts $T_v^{1,0}({D_{\mathrm{reg}}^* }) \oplus \C\{v\} = \C^n$. Therefore, $h^{\prime}_v(1) \neq 0$, hence
$\operatorname{ord}_1 h_v(z) = 1,$ and consequently $i_v(\C\{v\}, D_{\mathrm{reg}}^*) = 1.$

By the above lemma, we have the following proposition.
\begin{proposition}\label{2.5}
With the above notations, we have
\begin{align*}
\int_{S_p(1)} m_{f_{\vec{a}}}(r,D) \, \sigma_p(\vec{a}) 
&= m_f(r,D), \\
\int_{S_p(1)} T_{f_{\vec{a}}}(r,D) \, \sigma_p(\vec{a}) 
&= T_f(r,D)+ O(\log r), \\
\int_{S_p(1)} N_{f_{\vec{a}}}^{(k)}(r,D) \ \sigma_p(\vec{a})
&= N_f^{(k)}(r,D) + O(\log r), \ \text{for any }1\le k \le\infty.
\end{align*}

\end{proposition}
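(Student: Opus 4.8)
My plan is to handle the three identities by two separate mechanisms. The proximity identity is immediate from Stoll's integration formula, Lemma~\ref{INTEGRATION}: applying it with $F=\lambda_D\circ f$---legitimate because Lemma~\ref{zero measure} gives $f_{\vec a}(\C)\not\subset\Supp(D)$ for almost every $\vec a$, so that $\lambda_D(f)$ has only integrable logarithmic singularities and lies in $L^1(S_p(r))$---rewrites $\int_{S_p(r)}\lambda_D(f)\,\sigma_p$ as $\int_{S_p(1)}\big(\int_0^{2\pi}\lambda_D(f(re^{i\theta}\vec a))\,\frac{d\theta}{2\pi}\big)\sigma_p(\vec a)$, whose inner integral is the radius-$r$ term of $m_{f_{\vec a}}(r,s,D)$. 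Subtracting the radius-$s$ expression and using linearity of the outer integral yields $\int_{S_p(1)}m_{f_{\vec a}}(r,s,D)\,\sigma_p(\vec a)=m_f(r,s,D)$ with no error term.

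The characteristic and counting identities both rest on a single fiber-integration identity,
\[
\int_{B_p(t)}\Omega\wedge v_p^{p-1}=t^{2p-2}\int_{S_p(1)}\Big(\int_{B_1(t)}\iota_{\vec a}^*\Omega\Big)\sigma_p(\vec a),
\]
valid for a closed $(1,1)$-form $\Omega$ on $\C^p$, which I would establish by pulling back along the $S^1$-bundle parametrization $q\colon S_p(1)\times(\C\setminus\{0\})\to\C^p\setminus\{0\}$, $(\vec a,z)\mapsto z\vec a$, and invoking the co-area formula together with Fubini; the weight $t^{2p-2}$ is the radial Jacobian, and the normalization is checked on the test forms $\Omega=v_p$ and $\Omega=\frac{i}{2\pi}dz_1\wedge d\bar z_1$. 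Dividing by $t^{2p-1}$ and integrating $dt$ over $[s,r]$ turns the left side into $T_f(r,s,D)$ and the right side into $\int_{S_p(1)}T_{f_{\vec a}}(r,s,D)\,\sigma_p(\vec a)$ once $\Omega=f^*c_1[D]$ and $\iota_{\vec a}^*f^*c_1[D]=f_{\vec a}^*c_1[D]$. Since $f$ is only meromorphic, $\Omega=f^*c_1[D]$ is smooth merely off the indeterminacy locus of $f$, which has codimension $\ge 2$; running the identity on the complement and bounding the contribution near that locus produces the $O(1)$ correction in the characteristic identity.

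For the level-$k$ counting identity I would apply the same formula to the closed positive current $\Omega_k:=[(f^*D)^{(k)}]$, so that $\int_{B_p(t)}\Omega_k\wedge v_p^{p-1}$ is precisely the unintegrated term defining $N_f^{(k)}$; the point is to evaluate the slice $\iota_{\vec a}^*\Omega_k$. Writing $f^*D=\sum_\lambda k_\lambda D_\lambda$, the preceding transversality lemma and the intersection computation $i_v(\C\{v\},D_{\mathrm{reg}}^*)=1$ show that for every direction $\vec a$ outside the measure-zero set $K\cup\pi(D_{\mathrm{sing}}\setminus\{0\})$ supplied by Sard's theorem, the line $\C\vec a$ meets each reduced component transversally with local intersection number $1$; hence the slice of $(f^*D)^{(k)}$ has multiplicity exactly $\min\{k,k_\lambda\}$ at every crossing and $\int_{B_1(t)}\iota_{\vec a}^*\Omega_k=n_{f_{\vec a}}^{(k)}(t)$. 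Restricting to the full-measure set of good directions---the intersection of the good sets from Lemma~\ref{zero measure}, the transversality lemma, and Sard's theorem---and running the identity then gives $\int_{S_p(1)}N_{f_{\vec a}}^{(k)}(r,D)\,\sigma_p(\vec a)=N_f^{(k)}(r,D)+O(\log r)$ for every $1\le k\le\infty$, the logarithmic error absorbing the correction from slicing the singular current and from the lower radial endpoint.

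The proximity identity and, granted the fiber-integration identity, the characteristic identity are routine; the genuine difficulty is the counting identity, namely slicing the current $[(f^*D)^{(k)}]$ by almost every complex line through the origin and verifying that the slice carries the correct multiplicities $\min\{k,k_\lambda\}$. This is exactly what the transversality lemma and the computation $i_v(\C\{v\},D_{\mathrm{reg}}^*)=1$ are designed to deliver, with Sard's theorem guaranteeing that the bad directions form a null set and so do not affect the integral over $S_p(1)$; controlling the residual error as $O(\log r)$ is the last technical step.
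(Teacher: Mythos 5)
Your treatment of the proximity identity is exactly the paper's (Lemma~\ref{INTEGRATION} applied to $\lambda_D\circ f$), and your transversality analysis matches the discussion preceding the proposition. The genuine gap is in the mechanism you propose for your central identity
\[
\int_{B_p(t)}\Omega\wedge v_p^{p-1}
   =t^{2p-2}\int_{S_p(1)}\Bigl(\int_{B_1(t)}\iota_{\vec a}^{\,*}\Omega\Bigr)\sigma_p(\vec a).
\]
This identity is true for \emph{closed} $(1,1)$-forms, but it cannot be proved by ``co-area plus Fubini'': any pointwise push-forward argument of that kind is blind to whether $d\Omega=0$, and the identity is false without closedness. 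For instance, with $p=2$ and $\Omega=\frac{i}{2\pi}|z_2|^2\,dz_1\wedge d\bar z_1$ one computes the left side to be $t^6/6$, while $\iota_{\vec a}^{\,*}\Omega=|a_1|^2|a_2|^2\,|z|^2\,dd^c|z|^2$ and $\int_{S_2(1)}|a_1|^2|a_2|^2\,\sigma_2=\tfrac16$ give $t^6/12$ on the right. What Fubini genuinely yields is the Crofton-type formula with the \emph{projective} kernel: since $(dd^c\log|z|^2)^{p-1}=\pi^*\omega_{FS}^{p-1}$ for $\pi:\C^p\setminus\{0\}\to\P^{p-1}(\C)$, one has, for \emph{every} $(1,1)$-form,
\[
\int_{B_p(t)}\Omega\wedge(dd^c\log|z|^2)^{p-1}
   =\int_{S_p(1)}\Bigl(\int_{B_1(t)}\iota_{\vec a}^{\,*}\Omega\Bigr)\sigma_p(\vec a),
\]
and this one does hold in the example (both sides equal $t^4/12$). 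The missing step---converting $t^{-(2p-2)}\int_{B_p(t)}\Omega\wedge v_p^{p-1}$ into the projectively weighted integral---is precisely where Stokes' theorem and $d\Omega=0$ must enter (and where, for currents, a Lelong-number term at the origin appears); your test forms pass only because they happen to be closed. The paper circumvents all of this: it reduces to $D$ very ample, embeds $X\hookrightarrow\P^N$, writes the characteristic function via the Green--Jensen formula as a difference of spherical means of $\log\|\iota\circ f\|$, applies the scalar Lemma~\ref{INTEGRATION} to this $L^1$ function, and then reverses Green--Jensen line by line, so closedness is consumed by the two Green--Jensen applications and no form-level slicing formula is ever needed.

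There is a second, related gap in the truncated counting identity: you apply the slicing identity to the closed positive current $[(f^*D)^{(k)}]$, which requires a slicing theory for currents together with a quantitative origin correction, and you leave the $O(\log r)$ control as an assertion. The paper's route is more elementary: since every divisor on $\C^p$ is principal, $(f^*D)^{(k)}=\sum_\lambda\min\{k,k_\lambda\}D_\lambda$ is the zero divisor of a global holomorphic map $g:\C^p\to\P^1(\C)$, so the \emph{untruncated} identity (which the paper derives from the First Main Theorem combined with the proximity and characteristic identities, equation (\ref{counting})) applied to $g$ gives $\int_{S_p(1)}N_{g_{\vec a}}(r,s,0)\,\sigma_p(\vec a)=N_f^{(k)}(r,s,D)+O(1)$. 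It then only remains to compare $N_{g_{\vec a}}$ with $N^{(k)}_{f_{\vec a}}$ (``truncate then slice'' versus ``slice then truncate''): by the transversality lemma these agree at every point of $\C^*\vec a$ for almost all $\vec a$---this is where the computation $i_v(\C\{v\},D^*_{\mathrm{reg}})=1$ is used, exactly as you intend---and the sole mismatch is at $z=0$, where the multiplicities $\sum_i\ord_0(D_i)\min\{k,k_i\}$ and $\min\{\sum_i\ord_0(D_i)k_i,\,k\}$ differ by a bounded amount, producing exactly the $O(\log r)$. So either prove the current-slicing formula with the origin term made explicit, or replace it by this principal-divisor reduction.
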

\begin{proof}
The first equality holds by Lemma \ref{INTEGRATION} and the definition of proximity function. For the second equality, without loss of generality, we assume that $D$ is very ample, since any Cartier divisor can be written as the difference of two ample divisors. Then $D$ induces a canonical embedding $
\iota : X \hookrightarrow \mathbb{P}^N
$
for some $N$, such that
$\iota^*\mathcal{O}_{\mathbb{P}^N}(1) \cong [D].
$ 
Denote by $\omega_{FS}$  the Fubini-Study metric on $\mathbb{P}^N$.  Then:
\begin{align*}
\int_s^r \frac{dt}{t^{2p-1}} \int_{B_p(t)} f^* \iota^* \omega_{FS} \wedge v_p^{p-1} 
&= \int_s^r \frac{dt}{t^{2p-1}} \int_{B_p(t)} f^* c_1[D] \wedge v_p^{p-1} \\[6pt]
&= T_f(r,s,D) + O(1).
\end{align*}
Let a reduced representation of $\iota \circ f: \mathbb{C}^p \rightarrow \mathbb{P}^N$ be $(F_0, \dots, F_N)$, where $F_0,\dots,F_N \in \mathcal{O}(\mathbb{C}^p)$. By the Green-Jensen formula  (see \cite{book:minru}, \cite{book:minru2}), we have
\begin{align*}
&\int_s^r \frac{dt}{t^{2p-1}}
   \int_{B_p(t)} (\iota \circ f)^* \omega_{FS} \wedge \nu_p^{p-1} \\[6pt]
&= \frac{1}{2}\int_{S_p(r)} \log (\sum_{i=0}^N |F_i|^2 ) \,\sigma_p
   \;-\frac{1}{2}\; \int_{S_p(s)} \log (\sum_{i=0}^N |F_i|^2 ) \,\sigma_p\\[6pt]
&= \frac12\int_{S_p(1)}
\left(
\int_{S_1(r)}
\log\Bigl(\sum_{j=0}^N |F_j(\vec{a}z)|^2\Bigr)\sigma_1
-
\int_{S_1(s)}
\log\Bigl(\sum_{j=0}^N |F_j(\vec{a}z)|^2\Bigr)\sigma_1
\right)\sigma_p(\vec{a}).\\
\end{align*}

For almost all \(\vec a\in S_p(1)\), we have $\mathbb{C}^*\vec a \cap I(f)=\emptyset$, where $I(f)=\{F_0= \dots=F_N=0\}$ is the indeterminacy locus of $f$, $F_0(\vec a z),\dots,F_N(\vec a z)$ may have common zeros. Let $m=\min_{0\le j\le N}\ord_0 F_j( z)$. Then $(
\frac{F_0(\vec a z)}{z^m},
\dots,\frac{F_N(\vec a z)}{z^m})$ is a reduced representation of $\iota \circ f_{\vec{a}}$ for almost all  \(\vec a\in S_p(1)\).

We obtain
\begin{align*}
&\int_s^r \frac{dt}{t^{2p-1}}
\int_{B_p(t)}
(\iota \circ f)^* \omega_{FS}\wedge v_p^{p-1} \\
&=
\frac12\int_{S_p(1)}
\Biggl(
\int_{S_1(r)}
\log
\Biggl(
\sum_{j=0}^N
\left|
\frac{F_j(\vec a z)}{z^{m}}
\right|^2
\Biggr)
\sigma_1 
-
\int_{S_1(s)}
\log
\Biggl(
\sum_{j=0}^N
\left|
\frac{F_j(\vec a z)}{z^m}
\right|^2
\Biggr)
\sigma_1
\Biggr)\sigma_p(\vec{a}) \\
&
+
\frac12\int_{S_p(1)}
\left(
\int_{S_1(r)}
\log |z|^{2m}\sigma_1-\int_{S_1(s)}
\log |z|^{2m}\sigma_1\right)
\,\,\sigma_p(\vec{a}) .\\
&=
\int_{S_p(1)}\left(
\int_s^r
\frac{dt}{t}
\int_{B_1(t)}
(\iota\circ f_{\vec a})^\ast\omega_{FS}\right)
\,\sigma_p(\vec a) +O\!\left(\log\frac rs\right)\\
&=
\int_{S_p(1)}
T_{f_{\vec a}}(r,s,D)
\,\sigma_p(\vec a)+O\!\left(\log\frac rs\right).
\end{align*}

Hence
\[
 T_f(r,s,D) = \int_{S_p(1)} T_{f_{\vec{a}}}(r,s,D) \, \sigma_p(\vec{a}) + O(\log\frac rs).
\]

By the First Main Theorem, it follows that
\begin{align}\label{counting}
N_f(r,s,D)
= \int_{S_p(1)} N_{f_{\vec{a}}}(r,s,D)\,\sigma_p(\vec{a})  + O(\log\frac rs).  
\end{align}

We now prove the third identity for $1\le k <\infty$.
Since $f^*D$ is an effective divisor on $\mathbb{C}^p$ with the irreducible decomposition
$
f^*D = \sum_{\lambda \in I} k_\lambda D_\lambda
$ and $\sum_{\lambda \in I} \min\{k, k_\lambda\} D_\lambda$ is a principal divisor on $\mathbb{C}^p$, there is a holomorphic function $g:\mathbb{C}^p \to \mathbb{P}^1(\C)$ such that $g^*(0)=\sum_{\lambda \in I} \min\{k, k_\lambda\} D_\lambda$,
by (\ref{counting}) we have,
\[
\int_{S_p(1)} N_{g_{\vec{a}}}(r,s,(0))\,\sigma_p(\vec{a}) 
= N_g(r,s,(0)) + O\!\left(\log\frac rs\right)=N_f^{(k)}(r,s,D)+ O\!\left(\log\frac rs\right).
\]
For almost all $\vec a \in S_p(1)$,  
the punctured complex line $\C^*\vec a $ intersects with $\operatorname{Supp} D_\lambda$ transversally.  
There are only finitely many $\lambda \in I$ such that $0 \in D_\lambda$,  denote them by $D_{1},D_{2},\dots,D_{\ell}$.

For almost all $\vec a \in S_p(1)$,
\[
(\C \vec a)\cap \sum_{\lambda \in I} \min\{k,k_\lambda\}D_\lambda
= \sum_{i=1}^\ell \operatorname{ord}_0(D_i)\,\min\{k,k_i\} [0]
+ \sum_{\lambda} \min\{k,k_\lambda\} (P_{\lambda,1}^{\vec a} + P_{\lambda,2}^{\vec a} + \cdots)
\]

\begin{align}\label{intersection}
(f_{\vec a}^*D)^{(k)}=\min\{(f_{\vec a}^*D)_0,\,k\}[0] 
   + \sum_\lambda \min\{k,k_\lambda\}(P_{\lambda,1}^{\vec a} + P_{\lambda,2}^{\vec a} + \cdots)
\end{align}
where \((f_{\vec a}^*D)_0\) denotes the multiplicity of \(f_{\vec a}^*D\) at \(0\) and $\C ^*\vec a$ intersects with $\text{Supp}D_{\lambda}$ at $P_{\lambda,1}^{\vec a},P_{\lambda,2}^{\vec a},\dots$ 

Then we have
\begin{align*}
&\int_{S_p(1)} N_{g_{\vec a}}(r,s,0)\,\sigma_p(\vec a)
= \int_{S_p(1)} \int^r_{s} \frac{dt}{t} 
   \left( \int_{B_{1}(t)\cap \sum \min\{k,k_\lambda\}D_\lambda}1\right) \sigma_p(\vec a) \\[6pt]
&= \int_{S_p(1)} \int^r_{s} \frac{dt}{t} \, 
   \Bigl( \int_{B^*_{1}(t)\cap \sum \min\{k,k_\lambda\}(P_{\lambda,1}^{\vec a} + P_{\lambda,2}^{\vec a} + \cdots)} 1 \Bigr)\sigma_p(\vec a)\\
&\quad+ \sum_{i=1}^\ell \operatorname{ord}_0(D_i)\min\{k,k_i\}\log(\frac{r}{s}).
\end{align*}
On the other hand, by (\ref{intersection}), we have
\begin{align*}
&\int_{S_p(1)} N^{(k)}_{f_{\vec a}}(r,s,D)\,\sigma_p(\vec a) 
= \int_{S_p(1)} \int^r_{s} \frac{dt}{t} 
   \left( \int_{(f_{\vec a}^*D)^{(k)} \cap B_1(t)} 1 \right) \sigma_p(\vec a) \\[6pt]
&= \int_{S_p(1)} \int^r_{s} \frac{dt}{t}
   \left( \int_{B_1(t) \cap \Bigl( \min\{(f_{\vec a}^*D)_0,\,k\}[0] 
   + \sum_\lambda \min\{k,k_\lambda\}(P_{\lambda,1}^{\vec a} + P_{\lambda,2}^{\vec a} + \cdots)\Bigr)} 
   1\right) \sigma_p(\vec a) \\[6pt]
&= \int_{S_p(1)} \int_s^{\,r} \frac{dt}{t}\,
   \left( 
      \int_{B_1^\ast(t) \cap 
      \sum_{\lambda} 
      \min\{k, k_\lambda\}\,\bigl(P^{\vec a}_{\lambda,1} + P^{\vec a}_{\lambda,2} + \cdots\bigr)}
      1 
   \right) 
   \sigma_p(\vec a)  \\
&\quad
  + \int_{S_p(1)}\min\!\left\{(f_{\vec a}^*D)_0,\; k \right\}
  \sigma_p(\vec a)\log\frac{r}{s} .
\end{align*}

Therefore
\[
\Biggl| \int_{S_p(1)} N^{(k)}_{f_{\vec a}}(r,s,D)\,\sigma_p(\vec a)
 - \int_{S_p(1)} N_{g_{\vec a}}(r,s,0)\,\sigma_p(\vec a) \Biggr|
 = O(\log \frac{r}{s}) .
\]

This implies that
\[
N^{(k)}_f(r,D) 
= \int_{S_p(1)} N^{(k)}_{f_{\vec a}}(r,D)\,\sigma_p(\vec a) + O(\log r).
\]
\end{proof}
\section{Green-Griffiths jet differentials and logarithmic  jet differentials}
We  recall the concept of the Green-Griffiths jet differentials and the  Green-Griffiths logarithmic  jet differentials. For references, see \cite{Dem}, \cite{DL01}.

\noindent$\bullet$ {\bf Jet bundle}.   Let $X$ be a complex manifold of dimension $n$.
Let $x\in X$ and let   $J_k(X)_x$
 be  the set of equivalence classes of holomorphic maps  $f:  (\bigtriangleup, 0)\rightarrow (X, x)$, 
where $ \bigtriangleup$ is a disc of unspecified positive radius,
with the equivalence relation $f\sim g$  if and only if 
$f^{(j)}(0)=g^{(j)}(0)$  for $0\leq j\leq k$, when computed in some local coordinate
system of $X$ near $x$. The equivalence class of $f$ is denoted by $j_k(f)$, which is called the $k$-jet of $f$. A $k$-jet $j_k(f)$ is said
to be {\it regular} if $f'(0)\not=0$. 
Set 
$$J_k(X)=\cup_{x\in X} J_k(X)_x$$
and consider the natural projection
$$\pi_k: J_k(X)\rightarrow X.$$
Then $J_k(X)$ is a complex manifold which carries the structure of a holomorphic fiber bundle over $X$, which is called the {\it $k$-jet bundle over $X$}. When $k = 1,$ $J_1(X)$ is canonically isomorphic to the holomorphic tangent bundle  $TX$ of $X$.
On $J_kX$, there is a natural ${\Bbb C}^*$ action defined by, for any $\lambda\in {\Bbb C}^*$ and $j_k(f)\in J_kX$, set 
$$\lambda\cdot j_k(f)=j_k(f_{\lambda})$$
where $f_{\lambda}$ is given by  $t\mapsto f(\lambda t)$. 

\noindent$\bullet$ {\bf Jet Differential}. A {\it jet differential of order $k$} is a holomorphic map $\omega: J_k(X)\rightarrow {\Bbb C}$ which is a polynomial on every fiber, and 
a {\it jet differential of order $k$ and degree $m$} is a holomorphic map $\omega: J_k(X)\rightarrow {\Bbb C}$ which is a polynomial on every fiber such that $$\omega(\lambda\cdot j_k(f))=\lambda^m\omega(j_k(f)).$$
The  Green-Griffiths  sheaf ${\mathcal E}_{k, m}^{GG}$ of order $k$ and degree $m$
is defined as follows: for any open set $U\subset X$, 
$${\mathcal E}_{k, m}^{GG}(U)=\{\mbox{jet differentials of order $k$ and degree $m$ on}~U\}.$$
It is a locally free sheaf, we denote its vector bundle on $X$ as $E_{k, m}^{GG}T^*_X$.
Let $\omega \in {\mathcal E}_{k, m}^{GG}(U)$,  the differentiation 
$d\omega\in {\mathcal E}_{k+1, m+1}^{GG}(U)$ is defined by 
$$d\omega(j_{k+1}(f))= {d\over dt} \omega(j_k(f)(t))\Big|_{t=0}.$$
Note that, formally, we have $d(d^pz_j)=d^{p+1}z_j$ for the local coordinates $(z_1, \dots, z_n)$.

\noindent $\bullet$  {\bf  Logarithmic Jet bundle}.  In the logarithmic setting,  let $D \subset X$ be a normal crossing divisor on $X$. Recall that $D \subset X$ is {\it of normal crossing} 
if, at each $x\in X$,  there exist some local coordinates $z_1,...,z_\ell, z_{\ell+1},..., z_n$  centered at $x$ such that $D$ is locally defined by
$$D = \{z_1\cdots z_\ell = 0\}.$$ 
A holomorphic section $s\in H^0(U,J_k(X))$ over an open subset $U\subset X$ is said to be a \emph{ logarithmic $k$-jet field} if ~$\tilde{\omega}\circ s$ are holomorphic for all sections $\omega\in H^0(U',T_X^*(\log D))$, for all open subsets $U'\subset U$, where $\tilde{\omega}$ are induced maps defined as in \cite{J. Noguchi and J. Winkelmann}. Such logarithmic $k$-jet fields define a
subsheaf of $J_k(X)$, and this subsheaf is itself a sheaf of sections of a holomorphic fiber bundle over $X$, called the \emph{logarithmic $k$-jet bundle over $X$ along $D$}, denoted by $J_k(X;\log D)$.

The group $\mathbb{C}^*$ acts fiberwise on $J_k(X;\log D)$ as follows. For local coordinates 
\[
z_1,\dots,z_{\ell},z_{\ell+1},\dots,z_n\quad{(\ell=\ell(x))}
\]
centered at $x$ in which $D=\{z_1\dots z_{\ell}=0\}$, for any logarithmic $k$-jet field along $D$ represented by some germ $f=(f_1,\dots,f_n)$, if $\varphi_{\lambda}(z)=\lambda z$ is the homothety with ratio $\lambda\in \mathbb{C}^*$, the action is given by
\[
\begin{cases}
\big(\log(f_i\circ \varphi_{\lambda})\big)^{(j)}
=
\lambda^j
\big(\log f_i\big)^{(j)}\circ\varphi_{\lambda} &
\quad
{(1\,\leq\,i\,\leq\,\ell),}
\\
\big(f_i\circ \varphi_{\lambda}\big)^{(j)}
\quad\quad\,=
\lambda^j f_i^{(j)}\circ\varphi_{\lambda}
&
\quad
{(\ell+1\,\leq\,i\,\leq\,n).}
\end{cases}
\]

\noindent $\bullet$  {\bf  Logarithmic Jet differential}. A \emph{logarithmic jet differential} of {\sl order} $k$ and {\sl degree} $m$ at a point $x \in X$ 
is a polynomial $Q(f^{(1)}, \dots, f^{(k)})$ on the fiber over $x$ of 
$J_k(X;\log D)$ enjoying weighted homogeneity:
\[
Q\big(j_k(f \circ \varphi_{\lambda})\big)
=
\lambda^m Q\big(j_k(f)\big).
\]
Denote by $E_{k,m}^{GG} T_X^*(\log D)_x$ the vector space of such polynomials and set
\[
E_{k,m}^{GG} T_X^*(\log D)
:=
\bigcup_{x \in X}
E_{k,m}^{GG} T_X^*(\log D)_x.
\]
By Fa\`a di Bruno's formula \cite{J. Merker}, 
$E_{k,m}^{GG} T_X^*(\log D)$ carries the structure of a vector bundle over $X$, 
called the \emph{logarithmic Green--Griffiths vector bundle}.  

A global section $\mathcal{P}$ of $E_{k,m}^{GG} T_X^*(\log D)$ locally is of the following type in any local coordinates $(z_1, \ldots, z_n)$ on $U$ with 
$D \cap U = \{ z_1 \cdots z_r = 0 \}$,
\begin{align}\nonumber
\mathcal{P}
&=
\sum_{\nu}
\omega_{\nu_{1,1}\cdots \nu_{1,k}\,\cdots\,\nu_{n,1}\cdots \nu_{n,k}}
\;\bigg(
  \frac{dz_1}{z_1}
\bigg)^{\nu_{1,1}}
\cdots
\bigg(
  \frac{d^k z_1}{z_1}
\bigg)^{\nu_{1,k}}
\cdots
\bigg(
  \frac{dz_r}{z_r}
\bigg)^{\nu_{r,1}} \notag \\\nonumber
&\quad \cdots
\bigg(
  \frac{d^k z_r}{z_r}
\bigg)^{\nu_{r,k}}
(dz_{r+1})^{\nu_{r+1,1}}
\cdots
(d^k z_{r+1})^{\nu_{r+1,k}} \notag \\\nonumber
&\quad \cdots
(dz_n)^{\nu_{n,1}}
\cdots
(d^k z_n)^{\nu_{n,k}},
\end{align}
\[
\nu = (\nu_{1,1}, \dots, \nu_{1,k}, \dots, \nu_{n,1}, \dots, \nu_{n,k})
\]
is a $kn$-tuple of non-negative integers with
\[
(\nu_{1,1} + 2\nu_{1,2} + \cdots + k\nu_{1,k})
+ \cdots +
(\nu_{n,1} + 2\nu_{n,2} + \cdots + k\nu_{n,k})
= m,
\]
and
$
\omega_{\nu_{1,1}\cdots \nu_{1,k}\,\cdots\,\nu_{n,1}\cdots \nu_{n,k}}
$
is a locally defined holomorphic function.

\begin{proposition}\label{prop3.1}
Let $X$ be a complex projective manifold of dimension $n$, and let $D$ be a reduced effective divisor on $X$. 
Let $L$ be a line bundle on $X$ with a Hermitian metric $h$. 
Let $\mathcal{P} \in H^0(X, E^{GG}_{k,m}(T_X^*(\log D)) \otimes L)$ be a twisted logarithmic $k$–jet differential. Let $f : \C^p \to X$ be a meromorphic map such that $f(\C^p) \not\subset D$.  
Then
\[
\int_{S^{2p-1}(1)}\int^{2\pi}_0 \log^+ \|\mathcal{P}(j_k(f_{\vec{a}})) (re^{i\theta})\|_{h}\frac{d\theta}{2\pi}\sigma(\vec{a}) 
   \leq  S_f(r, E)
\]
where $E$ is an (arbitrarily) ample divisor on $X$.
\end{proposition}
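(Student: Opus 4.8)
The plan is to reduce the estimate to the classical one–variable logarithmic derivative lemma for (logarithmic) jet differentials by slicing, and then to integrate the resulting one–variable inequality over the sphere $S_p(1)$ using Proposition~\ref{2.5} together with Jensen's inequality. By Lemma~\ref{zero measure}, for almost all $\vec a\in S_p(1)$ the sliced curve $f_{\vec a}=f\circ\iota_{\vec a}:\C\to X$ satisfies $f_{\vec a}(\C)\not\subset\Supp(D)$, so that $\mathcal P\big(j_k(f_{\vec a})\big)$ is a well–defined, not identically vanishing, one–variable jet differential along $f_{\vec a}$; we work only with such $\vec a$.

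First I would establish a one–variable estimate in ``two–radius'' form, i.e.\ without exceptional sets. Cover the compact manifold $X$ by finitely many relatively compact coordinate charts adapted to the normal crossing structure of $D$ (so that $D=\{z_1\cdots z_r=0\}$ in each chart); using the local expression of $\mathcal P$ recorded before the Proposition, on each chart $\mathcal P\big(j_k(f_{\vec a})\big)$ is a polynomial, with bounded holomorphic coefficients, in the logarithmic derivatives $(f_{\vec a})_i^{(j)}/(f_{\vec a})_i$ (for $1\le i\le r$) and the ordinary derivatives $(f_{\vec a})_i^{(j)}$ (for $i>r$), of orders $j\le k$. Since $X$ is compact, a local holomorphic frame $e_L$ of $L$ has $\|e_L\|_h$ bounded above and below, so $\log^+\|\mathcal P(j_k(f_{\vec a}))\|_h$ agrees, up to an additive $O(1)$, with $\log^+$ of the scalar coordinate expression; a standard partition–of–unity argument then reduces the global bound to these local expressions. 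Applying the classical logarithmic derivative lemma to each term yields, for every $r<\rho$ and almost every $\vec a$,
\[
\int_0^{2\pi}\log^+\big\|\mathcal P\big(j_k(f_{\vec a})\big)(re^{i\theta})\big\|_h\,\frac{d\theta}{2\pi}
\;\le\; C\Big(\log^+T_{f_{\vec a}}(\rho,E)+\log^+\tfrac{1}{\rho-r}+\log\rho\Big)+O(1),
\]
where $C$ and the $O(1)$ depend only on $X,D,\mathcal P,E$ and not on $\vec a$.

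Next I would integrate this inequality over $\vec a\in S_p(1)$ against $\sigma_p$. The left–hand side is exactly the quantity in the Proposition. For the right–hand side, the second equality of Proposition~\ref{2.5} gives $\int_{S_p(1)}T_{f_{\vec a}}(\rho,E)\,\sigma_p(\vec a)=T_f(\rho,E)+O(1)$, and Jensen's inequality applied to the concave function $x\mapsto\log(1+x)$ yields
\[
\int_{S_p(1)}\log^+T_{f_{\vec a}}(\rho,E)\,\sigma_p(\vec a)
\;\le\;\log\!\Big(1+\int_{S_p(1)}T_{f_{\vec a}}(\rho,E)\,\sigma_p(\vec a)\Big)
\;\le\;\log^+T_f(\rho,E)+O(1).
\]
Combining, for all $r<\rho$,
\[
\int_{S_p(1)}\int_0^{2\pi}\log^+\big\|\mathcal P\big(j_k(f_{\vec a})\big)(re^{i\theta})\big\|_h\,\frac{d\theta}{2\pi}\,\sigma_p(\vec a)
\;\le\; C\Big(\log^+T_f(\rho,E)+\log^+\tfrac{1}{\rho-r}+\log\rho\Big)+O(1).
\]
Finally, choosing $\rho=r+1/T_f(r,E)$ and invoking the standard calculus (Borel) lemma converts the right–hand side into $O(\log^+T_f(r,E))+O(\log r)+O(1)$ outside an exceptional set of finite measure, which is the claimed bound $S_f(r,E)$.

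The main obstacle is the uniform treatment of the exceptional sets. If one applied the one–variable lemma directly in its ``except'' form, the exceptional set of radii would depend on $\vec a$, and these sets need not assemble into a single set of finite measure after integration over $S_p(1)$. Keeping the estimate in two–radius form throughout the integration step, and invoking the Borel lemma only once on the already–integrated inequality, is precisely what produces a single exceptional set. A secondary technical point is the joint measurability of $\vec a\mapsto\int_0^{2\pi}\log^+\|\mathcal P(j_k(f_{\vec a}))(re^{i\theta})\|_h\,d\theta$, needed to apply Fubini and Jensen; this follows from the meromorphic dependence of the jet $j_k(f_{\vec a})$ on $\vec a$.
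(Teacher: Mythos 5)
Your proof is correct, but it takes a genuinely different route from the paper's. The paper never applies a slice-wise logarithmic derivative lemma at all: it expands the derivatives of the slice by the chain rule,
\[
\frac{\partial^k f_{\alpha t,\vec a}}{\partial z^k}(z)=\sum_{|\beta|=k}\frac{k!}{\beta!}\,\vec a^{\,\beta}\,\frac{\partial^{|\beta|} f_{\alpha t}}{\partial z^{\beta}}(\vec a z),
\]
so that $\log^+\|\mathcal P(j_k(f_{\vec a}))\|_h(re^{i\theta})$ is dominated by $\log^+$ of \emph{several-variable} logarithmic derivatives $\partial^\beta f_{\alpha j}/f_{\alpha j}$ evaluated at the point $\vec a\,re^{i\theta}\in S_p(r)$; then Stoll's fiber-integration identity (Lemma~\ref{INTEGRATION}) collapses the double integral over $S_p(1)\times[0,2\pi]$ into a single integral over $S_p(r)$, to which the several-variables lemma on logarithmic derivatives (Theorem A8.1.5 in Ru's book, together with Proposition 2.5.20 in Noguchi--Winkelmann) applies in one shot, so a single exceptional set appears automatically. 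Your route instead keeps the one-variable lemma but in uniform two-radius (Gol'dberg--Grinshtein) form, integrates over $\vec a$ via Proposition~\ref{2.5} and Jensen, and invokes Borel once at the end; your diagnosis of the exceptional-set pitfall and your remedy for it are exactly right, and your argument is more elementary in that it needs no several-variables machinery, at the price of having to track uniformity in $\vec a$. Two points you gloss over in that tracking: first, the two-radius estimate carries a term involving the value (or leading Laurent coefficient) of $x_{\alpha j}\circ f_{\vec a}$ at the origin; since every slice passes through $f(0)$ this term is $\vec a$-independent unless $f(0)$ lies on a coordinate hypersurface (e.g.\ on $D$), in which case the leading coefficient is a homogeneous polynomial $P_{m_0}(\vec a)$ and one must check $\int_{S_p(1)}\log^+\bigl|\log|P_{m_0}(\vec a)|\bigr|\,\sigma_p(\vec a)<\infty$ --- the same integrability argument the paper uses for $\log\frac{1}{|a_0(\vec a)|}$ in the proof of Theorem~\ref{SEMIABELIAN}. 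Second, to apply the lemma to $(f_{\vec a})_i^{(j)}/(f_{\vec a})_i$ with constants uniform in $\vec a$, the chart coordinates should be taken to be rational functions on $X$ (as the paper does), so that each sliced coordinate is globally meromorphic on $\C$ with characteristic bounded by $O(T_{f_{\vec a}}(\rho,E))+O(1)$ uniformly. With these standard repairs your argument goes through.
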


\begin{proof}
Our proof is partly based on Lemma 4.7.1 in \cite{J. Noguchi and J. Winkelmann}. By virtue of the Hironaka desingularization, we may assume that $D$ is of normal crossing type.  We take an affine covering $\{U_\alpha\}$ of $M$ and rational holomorphic functions 
$(x_{\alpha 1}, \dots, x_{\alpha n})$ on $U_\alpha$ such that
\[
dx_{\alpha 1} \wedge \cdots \wedge dx_{\alpha n}(x) \neq 0, 
\qquad \forall x \in U_\alpha,
\]
and 
\[
D \cap U_\alpha = \{ x_{\alpha 1} \cdots x_{\alpha s_\alpha} = 0 \}.
\]

On every $U_\alpha$ one gets
\[
\mathcal{P}|_{U_\alpha} = P_\alpha \!\left( 
   \frac{d^i x_{\alpha j}}{x_{\alpha j}}, \, d^h x_{\alpha l}
\right),
\qquad 1 \le i,h \le k, \ 
1 \le j \le s_\alpha, \ 
s_\alpha + 1 \le l \le n,
\]
where $P_\alpha$ is a polynomial in the variables described above whose coefficients 
are rational holomorphic functions on $U_\alpha$.

Let $f(z_1,\dots,z_p) = (f_{\alpha 1}(z_1,\dots,z_p), \dots, f_{\alpha n}(z_1,\dots,z_p))$.  
Then
\[
\mathcal{P}(j_k(f_{\vec{a}}))(z) = P_\alpha\!\left( 
   \frac{\frac{\partial^{i} f_{\alpha j,\vec{a}}}{\partial z^i}}{f_{\alpha j,\vec{a}}}, \, \frac{\partial^{h} f_{\alpha l,\vec{a}}}{\partial z^h}
\right).
\]
We have
\begin{align*}
&\frac{\partial f_{\alpha t,\vec{a}}}{\partial z}(z)
  = \sum_{i=1}^p a_i 
    \frac{\partial f_{\alpha t}}{\partial z_i}(\vec{a}z),
\qquad
\frac{\partial^2 f_{\alpha t, \vec{a}}}{\partial z^2}(z)
  = \sum_{i,j=1}^p a_ia_j\frac{\partial^2 f_{\alpha t}}{\partial z_i\partial z_j}(\vec{a}z)
\ldots,\qquad\\
&\frac{\partial^k f_{\alpha t,\vec{a}}}{\partial z^k}(z)
  = \sum_{|\beta|=k} 
    \frac{k!}{\beta!} 
    ( \vec{a})^{\beta}
    \frac{\partial^{|\beta|} f_{\alpha t}}{\partial z^{\beta}}(\vec{a}z), ~~~~1 \le t  \le n.
\end{align*}

Hence, on each local chart $U_\alpha$,
\begin{align*}
&\log^+\!\bigl\|\mathcal{P}(j_k(f_{\vec{a}}))\bigr\|_{h}(z)\\
&\le 
  C_{U_{\alpha}}
  \sum_{\ell=1}^k
  \sum_{|\beta|=\ell}
  \Biggl(
     \sum_{j=1}^{s(\alpha)} 
       \log^+\!\left| 
         \frac{\frac{\partial^{|\beta|} f_{\alpha j}}{\partial z^{\beta}}}{f_{\alpha j}}(\vec{a}z)
       \right|
     + 
     \sum_{h=1}^n 
       \log^+\!\left|
         \frac{\partial^{|\beta|} f_{\alpha h}}{\partial z^{\beta}}(\vec{a}z)
       \right|
  \Biggr)\\ 
&\le 
 C^{\prime}_{U_{\alpha}} 
\sum_{\ell=1}^k 
\sum_{|\beta|=\ell}
   \sum_{j=1}^{n}
    \log^+\!\left| 
         \frac{\frac{\partial^{|\beta|} f_{\alpha j}}{\partial z^{\beta}}}{f_{\alpha j}}(\vec{a}z)
       \right|
+ O(1).    
\end{align*}

Integrating the above inequality over the sphere $S_p(r)$ and applying Theorem A8.1.5 in \cite{book:minru} and Proposition 2.5.20 in \cite{J. Noguchi and J. Winkelmann}, we obtain
\[
\int_{S^{2p-1}(1)}\int^{2\pi}_0 \log^+ \|\mathcal{P}(j_k(f_{\vec{a}}))\|_{h}(re^{i\theta}) \frac{d\theta}{2\pi}\sigma(\vec{a}) 
   \leq  S_f(r, E).
\]
\end{proof}

\section{Holomorphic maps from ${\Bbb C}^p$ into semi-abelian varieties}
\noindent{\it Proof of Bloch's theorem for several variables}.
\begin{proof}
Without loss of generality, we assume $f(0)=0$. Let $X$ be the Zariski closure of $f(\mathbb{C}^p)$. By Corollary 5.1.9 in \cite{J. Noguchi and J. Winkelmann}, $X$ contains only countably many distinct proper semi-abelian subvarieties, say $A_1, A_2, \dots, A_n, \dots$,
then by Lemma~\ref{zero measure}, there exist $K_i \subset S_p(1)$, $i=1,2,\dots$ , such that
$
m_{S_p(1)}(K_i) = 0,
$
and for $\vec{a} \notin K_i$, one has
$
f_{\vec{a}}(\mathbb{C}) \not\subset A_i$. Thus, for $\vec{a} \notin \bigcup_{i=1}^\infty K_i$, we have
$f_{\vec{a}}(\mathbb{C}) \not\subset A_i, i=1,2,\dots.$, it follows that $X=\overline{f_{\vec{a}}(\mathbb{C})}^{\mathrm{Zar}}$ for such (fixed) $\vec{a}$.\\
However, by Bloch's theorem for holomorphic maps from ${\Bbb C}$ into semi-abelian varieties (one-variable-version of Bloch's theorem), $X$ is a semi-abelian subvariety of $A$. This proves our theorem.
\end{proof}

\noindent{\it Proof of Theorem  \ref{SEMIABELIAN}}.
\begin{proof} We follow the arguments of the proof of Theorem 6.4.1 in  \cite{J. Noguchi and J. Winkelmann}. We may assume from the beginning that
\begin{enumerate}[(i)]
    \item $A$ is a semi-abelian variety admitting an algebraic presentation
    \[
        0 \longrightarrow (\mathbb{C}^\ast)^t \longrightarrow A \longrightarrow A_0 \longrightarrow 0,
    \]
    and $\overline{A}$ is a smooth projective equivariant compactification of $A$;
    
    \item $\operatorname{St}(D)^{0} = \{0\}$;
    
    \item the closure $\overline{D}$ is big on $\overline{A}$ and is in good position.
\end{enumerate}

Since $\partial A$ is of normal crossing, consider the logarithmic $k$-th jet bundle
\[
J_k(\overline{A}; \log \partial A)
\]
over $\overline{A}$ along $\partial A$, and a morphism
\[
\psi_k : J_k(\overline{A}; \log \partial A) \to J_k(\overline{A}).
\]
Because of the flat structure of the logarithmic tangent bundle $T(\overline{A}; \log \partial A)$,
\[
J_k(\overline{A}; \log \partial A) \;\cong\; \overline{A} \times \mathbb{C}^{nk}.
\]
Let
\begin{equation}\nonumber
\pi_1 : J_k(\overline{A}; \log \partial A) \cong \overline{A} \times \mathbb{C}^{nk} \;\to\; \overline{A},
\qquad
\pi_2 : J_k(\overline{A}; \log \partial A) \cong \overline{A} \times \mathbb{C}^{nk} \;\to\; \mathbb{C}^{nk}
\end{equation}
be the first and second projections.  
For a $k$-jet $y \in J_k(\overline{A}; \log \partial A)$ we call $\pi_2(y)$ the \emph{jet part} of $y$.
We define
\[
J_k(\overline{D}; \log \partial A) = \psi_k^{-1}\big(J_k(\overline{D})\big).
\]
Note that  $J_k(\overline{D}; \log \partial A)$ is a subspace of $J_k(\overline{A}; \log \partial A)$, which depends in general on the embedding $\overline{D}\hookrightarrow \overline{A}$ (cf. Sect.~4.6.3  in  \cite{J. Noguchi and J. Winkelmann}).   We also note  that $\pi_2(J_k(\overline{D}; \log \partial A))$ is an algebraic subset of $\mathbb{C}^{nk}$, because $\pi_2$ is proper.

Without loss of generality, we assume $f(0)=0$.  Since $f$ is non-degenerate,  $\overline{f(\mathbb{C}^p)}^{\,Zar}=A$. 
By the proof of Bloch's theorem above, we have
$\overline{f_{\vec{a}}(\mathbb{C})}^{\,Zar} = A,
 \text{ for almost every } \vec{a} \in S_p(1).
$
That is, $f_{\vec{a}} : \mathbb{C} \to A$ is algebraically non-degenerate for almost every $\vec{a} \in S_p(1)$.

Fix an $\vec{a}_0 \in S_p(1) \setminus \bigcup_{i=1}^\infty K_i$,  and 
let
\[
j_k(f_{\vec{a}_0}) : \mathbb{C} \to J_k(\overline{A}; \log \partial A) \;\cong\; \overline{A} \times \mathbb{C}^{nk}
\]
be the $k$-th jet lift of $f_{\vec{a}_0}$.  
Let $
Y_{\vec{a}_0, k} = \pi_2\!\big(X_k(f_{\vec{a}_0})\big),$
\medskip
it follows from Lemma~6.4.5 in  \cite{J. Noguchi and J. Winkelmann} that there exists an integral $k_0$ and a polynomial function
$R_0(w)$ in $w \in \mathbb{C}^{nk_0}$ such that
\[
R_0|_{\pi_2 \big(J_k(\overline{D}; \log \partial A)\big)}\equiv0,~~~~~~~ R_0|_{Y_{\vec{a}_0, k_0}}\not\equiv0.
\]
Note here that $R_0$ depends on the vector $\vec{a}_0 \in S_p(1)$.

Denote by $\pi : \mathbb{C}^n \to A$  the universal covering and let
$
\tilde f : z \in \mathbb{C}^p \mapsto \big(\tilde f_1(z),\dots,\tilde f_n(z)\big) \in \mathbb{C}^n$
be a lift of $f$ with $\pi(\tilde f)=f$. Let
$\tilde f_{\vec a}(z)= \bigl(\tilde f_{\vec a,1}(z),\dots,\tilde f_{\vec a,n}(z)\bigr).$
Notice that $R_0|_{Y_{\vec{a}_0, k_0}}\not\equiv0$ is equivalent to $R_0(\tilde f_{\vec{a_0}}',\dots,\tilde f_{\vec{a_0}}^{(k_0)})\not\equiv0$. Let
$H(\vec{a}, z)=R_0(\tilde f_{\vec{a}}',\dots,\tilde f_{\vec{a}}^{(k_0)}).
$
Then there exists $z_0\in\mathbb C$ with $H(\vec{a}_0,z_0)\neq 0$. 
Thus the zero set 
$Z:=\{\vec{a}\in\mathbb C^{p}: H(\vec{a}, z_0)=0\}$ is a proper analytic subset, so if we restrict it to the unit sphere
\(S_{p}(1)\subset\mathbb{C}^p\), then $Z|_{S_{p}(1)}$ is of measure zero in $S_{p}(1)$.
Hence, for almost all $\vec{a}\in S_p(1),$  we have $R_0(\tilde f_{\vec{a}}',\dots,\tilde f_{\vec{a}}^{(k_0)})\not\equiv0 $.

Let $\{U_j\}$ be an affine open covering of $\overline{A}$ such that the line bundle $[\overline{D}]$ is
locally trivial over every $U_j$.  
We take a regular function $\sigma_j$ on each $U_j$ such that $\sigma_j$ is a defining function of $\overline{D}\cap U_j$, i.e.
$
(\sigma_j) = \overline{D}|_{U_j}.$
Fix a Hermitian metric $\|\cdot\|$ on $[\overline{D}]$.  
Then there are positive smooth functions $h_j$ on $U_j$ such that
\[
\|\sigma(x)\| = \frac{|\sigma_j(x)|}{h_j(x)}, \qquad x \in U_j,
\]
which is a well-defined function on $\overline{A}$.
We regard $R_0$ as a regular function on each $U_j \times \C^{nk_0}$.  
Then we have the following equation on every $U_j \times \C^{nk_0}$ 
\begin{equation}\label{eq:6.4.12}
b_{j0}\sigma_j + b_{j1}d\sigma_j + \cdots + b_{j k_0} d^{k_0}\sigma_j = R_0.
\end{equation}
Here each $b_{ji}$ is of the form
\[
b_{ji} = \sum_{\text{finite}} b_{ji l \beta_l}(x) w_l^{\beta_l},
\]
where $b_{ji l \beta_l}(x)$ are regular functions on $U_j$, and $w_l$ are the coordinate functions of $\mathbb{C}^{nk_0}$.  
Thus, we infer that, in every $U_j$
\begin{equation}\nonumber
\frac{1}{\|\sigma\|} = \frac{h_j}{|\sigma_j|}
= \left| h_j b_{j0} + h_j b_{j1}\frac{d\sigma_j}{\sigma_j} + \cdots +
h_j b_{j k_0} \frac{d^{k_0}\sigma_j}{\sigma_j} \right|\frac{1}{R_0}.
\end{equation}
Take relatively compact open subsets $U_j' \Subset U_j$ such that $\bigcup U_j' = \overline{A}$.  
For every $j$ there is a constant $C_j>0$ such that, for $x\in U_j'$,
\[
h_j |b_{ji}| \leq \sum_{\text{finite}} h_j |b_{ji l \beta_l}(x)|\, |w_l|^{\beta_l}
\leq C_j \sum_{\text{finite}} |w_l|^{\beta_l}.
\]
Thus, after enlarging $C_j$ if necessary, there is $d_j>0$ such that, for $z\in \C$ and $f_{\vec{a}}(z)\in U_j'$,
\[
h_j(f_{\vec{a}}(z))\, |b_{ji}(j_{k_0}(f_{\vec{a}})(z))| \leq
C_j\left(1+\sum_{1\leq l\leq n,\;1\leq k\leq k_0}
\big|\tilde f_{\vec{a}, l}^{(k)}(z)\big|\right)^{d_j}
\]
holds for almost all $\vec{a}\in S_p(1)$.

Therefore
\begin{align}\label{eq:6.4.13}
\frac{1}{\|\sigma(f_{\vec{a}}(z))\|}\notag
&= \frac{1}{\big|R_0(\tilde f_{\vec{a}}'(z),\dots,\tilde f_{\vec{a}}^{(k_0)}(z))\big|}
   \left| h_j b_{j0} + h_j b_{j1}\frac{d\sigma_j}{\sigma_j} + \cdots +
   h_j b_{j k_0}\frac{d^{k_0}\sigma_j}{\sigma_j} \right| \\\notag
&\;\leq\; \frac{1}{\big|R_0(\tilde f_{\vec{a}}'(z),\dots,\tilde f_{\vec{a}}^{(k_0)}(z))\big|}
   \sum_{j'=1}^N C_{j'}
   \left(1 + \sum_{1\leq l \leq n,\;1\leq k \leq k_0}
   \big|\tilde f_{\vec{a,l}}^{(k)}(z)\big|\right)^{d_{j'}} \notag \\\notag
&\qquad \times \Bigg(1 + 
   \left|\frac{d\sigma_{j'}}{\sigma_{j'}}(j_1(f_{\vec{a}})(z))\right| + \cdots +
   \left|\frac{d^{k_0}\sigma_{j'}}{\sigma_{j'}}(j_{k_0}(f_{\vec{a}})(z))\right|\Bigg). \notag
\end{align}

It follows that
\begin{align*}
m_{f_{\vec a}}(r,\overline D)
&=
\int_0^{2\pi}
   \log \frac{1}{\bigl\|\sigma\!\bigl(f_{\vec a}(re^{i\theta})\bigr)\bigr\|}
   \,\frac{d\theta}{2\pi}
   -
   \int_0^{2\pi}
   \log \frac{1}{\bigl\|\sigma\!\bigl(f_{\vec a}(e^{i\theta})\bigr)\bigr\|}
   \,\frac{d\theta}{2\pi}\\[6pt]
&\le 
\int_0^{2\pi}
   \log \frac{1}{\bigl|
      R\!\bigl(\tilde f_{\vec a}',\dots,\tilde f_{\vec a}^{(k_0)}\bigr)
      (re^{i\theta})
   \bigr|}
   \,\frac{d\theta}{2\pi} \\
&+\, O\!\Bigg(
  \sum_{\substack{1\le l\le n\\ 1\le k\le k_0}}
  \int_0^{2\pi}
     \log^{+}\!\bigl|
        \tilde f^{(k)}_{\vec a,l}(re^{i\theta})
     \bigr|
     \,\frac{d\theta}{2\pi}
  +\;
  \sum_{\substack{1\le j\le N\\ 1\le k\le k_0}}
  \int_0^{2\pi}
     \log^{+}\!\left|
        \frac{
           (\sigma_j\!\circ f_{\vec a})^{(k)}
        }{
           \sigma_j\!\circ f_{\vec a}
        }
        (re^{i\theta})
     \right|
     \,\frac{d\theta}{2\pi}
\Bigg)  \\
&-
   \int_0^{2\pi}
   \log \frac{1}{\bigl\|\sigma\!\bigl(f_{\vec a}(e^{i\theta})\bigr)\bigr\|}
   \,\frac{d\theta}{2\pi}
+ O(1).
\end{align*}

Since $R_0\!\big(\tilde f_{\vec a}',\dots,\tilde f_{\vec a}^{(k_0)}\big)$ is a holomorphic function on $\C^p\times\C$, it can be written as $z^{n_0}\sum^\infty_{i=0} a_i(\vec a)z^i$, with $a_0\not\equiv0$ on $\C^p$.

Therefore
\begin{align*}\
\int^{2\pi}_0\log\frac{1}{|R_0\!\big(\tilde f_{\vec a}',\dots,\tilde f_{\vec a}^{(k_0)}\big)(re^{i\theta})|}\frac{d\theta}{2\pi}
&=
\int^{2\pi}_0\log\frac{1}{|\sum^\infty_{i=0} a_i(\vec a)z^i|}(re^{i\theta})\frac{d\theta}{2\pi}-{n_0}\log r\\
&\leq 
\int^{2\pi}_0\log\frac{1}{|a_0(\vec a)|}\frac{d\theta}{2\pi}-{n_0}\log r\\
&=\log\frac{1}{|a_0(\vec a)|}-{n_0}\log r.
\end{align*}
Integrating the above inequality over $S_p(1)$, and applying Proposition \ref{2.5}, Proposition \ref{prop3.1}, Theorem A8.1.5 in \cite{book:minru}, and Lemma 3.8 in \cite{NWY}, we get
\begin{align*}
\int_{S_p(1)} \log \frac{1}{|a_0(\vec a)|} \, \sigma(\vec a) &< \infty,\\[6pt]
\int_{S_p(1)}
 \int_0^{2\pi}
   \log^{+}\!\left|
      \frac{(\sigma_j\!\circ f_{\vec a})^{(k)}}{\sigma_j\!\circ f_{\vec a}}
      (re^{i\theta})
   \right|
   \,\frac{d\theta}{2\pi}
\, \sigma(\vec a)
&\le S_f\!\bigl(r, \overline{D}\bigr),
\intertext{and}
\int_{S_p(1)} \int_0^{2\pi}
   \log^{+}\!\bigl|\tilde f^{(k)}_{\vec a,l}(re^{i\theta})\bigr|
   \,\frac{d\theta}{2\pi}\,  \sigma(\vec a)
&\le S_f\!\bigl(r, \overline{D}\bigr).\\[6pt]
\end{align*}

Observe that $\operatorname{ord}_z  f_{\vec a}^*D > k$ if and only if 
$j_k( f_{\vec a})(z) \in J_k(\overline D; \log \partial A)$, from this and (\ref{eq:6.4.12}), we infer that
\[
\operatorname{ord}_z  f_{\vec a}^*D - \min\{\operatorname{ord}_z  f_{\vec a}^*D, k_0\} 
\leq \operatorname{ord}_z\bigl(R_0(\tilde  f_{\vec a}', \ldots, \tilde  f_{\vec a}^{(k_0)})\bigr).
\]
Thus, we have, after taking integration, that
\[
N_{f_{\vec a}}(r,  D) - N_{f_{\vec a}}^{(k_0)}(r, D)
\leq N\bigl(r, R_0(\tilde  f_{\vec a}', \ldots, \tilde  f_{\vec a}^{(k_0)}),0\bigr).
\]

It follows that
\begin{align*}
&~~~~~~~N\bigl(r, R_0(\tilde  f_{\vec a}', \ldots, \tilde  f_{\vec a}^{(k_0)}),0\bigr)\\
&=
\int^{2\pi}_0\log{|R_0\!\big(\tilde f_{\vec a}',\dots,\tilde f_{\vec a}^{(k_0)}\big)|}(re^{i\theta})\frac{d\theta}{2\pi}-\int^{2\pi}_0\log{|R_0\!\big(\tilde f_{\vec a}',\dots,\tilde f_{\vec a}^{(k_0)}\big)|}(e^{i\theta})\frac{d\theta}{2\pi}\\
&\leq
\int^{2\pi}_0\log^+{|R_0\!\big(\tilde f_{\vec a}',\dots,\tilde f_{\vec a}^{(k_0)}\big)|}(re^{i\theta})\frac{d\theta}{2\pi}-\log|a_0(\vec a)|\\
&\leq 
 O\!\left(\sum_{\substack{1\le l\le n\\ 1\le k\le k_0}}
  \int_0^{2\pi}
   \log^{+}\!\bigl|\tilde f^{(k)}_{\vec a,l}(re^{i\theta})\bigr|
   \,\frac{d\theta}{2\pi}\,\right)-\log|a_0(\vec a)|.
\end{align*}
Integrating the above inequality over $S_p(1)$, we get
\begin{align*}
N_f(r, D) \leq N_f^{(k_0)}(r,D) + S_f\bigl(r, \overline{D}).
\end{align*}
Combining these together, it gives
\begin{align*}
T_f(r, \overline{D})\leq N^{(k_0)}_{f}(r, D)+S_f(r, \overline{D}) .    
\end{align*}
\end{proof}
\section{Meromorphic maps from ${\Bbb C}^p$ into projective varieties}

\begin{proposition}\label{jet exsits}
Let $c$ be a positive integer with $c \ge 5n-1$, and let
$D \subset \mathbb{P}^n(\mathbb{C})$ be a generic smooth hypersurface of degree
$d \ge 15(c+2)n^{n}$.
Let $f:\mathbb{C}^p \to \mathbb{P}^n(\mathbb{C})$ be a meromorphic,
algebraically nondegenerate map.
Then, for jet order $k=n$ and for weighted degrees $m \gg d$,
there exist an integer $0\le \ell \le m$ and a global logarithmic jet differential
\[
{\mathcal P} \in H^0\!\left(\mathbb{P}^n(\mathbb{C}),
E^{\mathrm{GG}}_{n,m}\, T^*_{\mathbb{P}^n(\mathbb{C})}(\log D) \otimes
\mathcal{O}_{\mathbb{P}^n(\mathbb{C})}\big(-cm+\ell(5n-2)\big)
\right)
\]
such that
$
{\mathcal P}\big(j_n(f_{\vec a})\big)\not\equiv 0
$ for almost every $\vec a \in S_{p}(1)$.
\end{proposition}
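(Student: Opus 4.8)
The plan is to establish this construction by combining the algebraic existence of logarithmic jet differentials on $\mathbb{P}^n(\mathbb{C})$ with Siu's slanted--vector--field technique, and then to transfer the resulting non-vanishing statement from a single generic direction to almost every $\vec a$ by the measure--zero argument already employed in Lemma~\ref{zero measure} and in the proof of Bloch's theorem.

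First I would produce an initial jet differential on the central fibre. By the explicit existence theorem for logarithmic Green--Griffiths jet differentials on $\mathbb{P}^n(\mathbb{C})$ (following Darondeau; see also \cite{DMR}, \cite{Siu2015}), when $D$ is a generic smooth hypersurface of degree $d\ge 15(c+2)n^{n}$ and the weighted degree satisfies $m\gg d$, the space
\[
H^0\!\left(\mathbb{P}^n(\mathbb{C}),\, E^{GG}_{n,m}\,T^*_{\mathbb{P}^n(\mathbb{C})}(\log D)\otimes \mathcal{O}(-cm)\right)
\]
is non-zero; I fix a non-zero element $\mathcal{P}_s$ of it. The bound $d\ge 15(c+2)n^{n}$ is precisely what a Riemann--Roch computation, combined with the algebraic Morse inequalities on the logarithmic $n$-jet bundle, requires in order that the leading Euler-characteristic term dominate the error terms and force a non-trivial section even after twisting by the negative power $\mathcal{O}(-cm)$.

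Next I would pass to the universal family and apply slanted vector fields. Viewing $D$ as the fibre over $s\in S=\mathbb{P}H^0(\mathbb{P}^n(\mathbb{C}),\mathcal{O}(d))$, the section $\mathcal{P}_s$ extends by the semicontinuity theorem to a holomorphic family $\mathcal{P}$ of logarithmic jet differentials over a Zariski-open neighbourhood $U_s$ of $s$. By the global generation results of Merker and Darondeau, the vertical directions of the $n$-jet space of the universal hypersurface $\mathcal{X}\subset \mathbb{P}^n(\mathbb{C})\times S$ are spanned by meromorphic slanted vector fields whose pole order along the $\mathbb{P}^n(\mathbb{C})$-factor is at most $5n-2$. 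Using the Bloch argument of Section~4, I fix a direction $\vec a_0\in S_p(1)$ for which $f_{\vec a_0}$ is algebraically non-degenerate. Differentiating $\mathcal{P}$ successively along these vector fields, each differentiation twists the bundle by a factor $\mathcal{O}(5n-2)$, producing after $\ell$ steps a section
\[
\mathcal{P}\in H^0\!\left(\mathbb{P}^n(\mathbb{C}),\, E^{GG}_{n,m}\,T^*_{\mathbb{P}^n(\mathbb{C})}(\log D)\otimes \mathcal{O}(-cm+\ell(5n-2))\right);
\]
the hypothesis $c\ge 5n-1$ guarantees that $-cm+\ell(5n-2)<0$ for all $0\le \ell\le m$, so that the output still takes values in an anti-ample line bundle, as needed to feed Theorem~\ref{smtXie}, while the degree-$m$ structure of $\mathcal{P}_s$ bounds the number of useful differentiations by $\ell\le m$.

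The main obstacle is the non-vanishing step, and this is where algebraic non-degeneracy is essential. The justification is that, since $f_{\vec a_0}$ has Zariski-dense image and the slanted vector fields span the vertical jet directions, the jet $j_n(f_{\vec a_0})(t_0)$ at a generic point together with its orbit under these fields fills out the relevant jet space; hence the successive derivatives $V_{i_1}\cdots V_{i_\ell}\mathcal{P}$ cannot all vanish at this jet, for otherwise $\mathcal{P}_s$ would vanish to infinite order there and, being polynomial in the jet variables, would vanish identically, contrary to $\mathcal{P}_s\neq 0$. Choosing the shortest sequence with $(V_{i_1}\cdots V_{i_\ell}\mathcal{P})(j_n(f_{\vec a_0})(t_0))\neq 0$ yields the desired $\mathcal{P}$ with $\mathcal{P}(j_n(f_{\vec a_0}))\not\equiv 0$. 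Finally I would transfer this to almost every direction: since $f_{\vec a}(z)=f(z\vec a)$ depends holomorphically on $(\vec a,z)$, the function $(\vec a,z)\mapsto \mathcal{P}(j_n(f_{\vec a}))(z)$ is not identically zero, being non-zero at $(\vec a_0,t_0)$; therefore, exactly as in Lemma~\ref{zero measure}, the set of $\vec a\in S_p(1)$ for which it vanishes identically in $z$ has measure zero, giving $\mathcal{P}(j_n(f_{\vec a}))\not\equiv 0$ for almost every $\vec a\in S_p(1)$.
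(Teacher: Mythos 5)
Your overall architecture (existence of $\mathcal{P}_s$ on the central fibre, extension by semicontinuity to a family over $U_s$, slanted vector fields of pole order $5n-2$, differentiation until non-vanishing, then a measure-zero transfer to almost every $\vec a$) is the same as the paper's. But there is a genuine gap at the step where you ``fix a direction $\vec a_0\in S_p(1)$ for which $f_{\vec a_0}$ is algebraically non-degenerate'' by invoking the Bloch argument of Section~4. That argument rests on the fact that a semi-abelian variety contains only \emph{countably} many semi-abelian subvarieties, so the bad direction-sets $K_i$ form a countable union of measure-zero sets. Nothing of the sort holds for $\mathbb{P}^n(\mathbb{C})$: its proper subvarieties form uncountable families, so the countability argument collapses. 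Worse, the claim you need is false in general: for the algebraically non-degenerate map $f(z_1,\dots,z_n)=[1:z_1:\cdots:z_n]$, every restriction $f_{\vec a}(t)=[1:a_1t:\cdots:a_nt]$ parametrizes a line in $\mathbb{P}^n(\mathbb{C})$, hence is algebraically degenerate for $n\ge 2$; no direction $\vec a_0$ with your property exists.

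The paper avoids exactly this trap: it never asks $f_{\vec a_0}$ to be non-degenerate. It uses the non-degeneracy of the full map $f$ only to conclude that $f^{-1}\bigl(\{\mathcal{P}_s=0\}\cup D\bigr)$ and $\bigl\{\tfrac{\partial f}{\partial r}=0\bigr\}$ have measure zero in $\mathbb{C}^p$, then picks a single point $z_0$ outside these sets and puts $\vec a_0=z_0/\|z_0\|$. The two pointwise conditions $f(z_0)\notin\{\mathcal{P}_s=0\}\cup D$ and $\tfrac{\partial f}{\partial r}(z_0)\neq 0$ are precisely what the differentiation argument of \cite{HVX} needs: they guarantee that $j_n(f_{\vec a_0})$ at the corresponding parameter value is a \emph{regular} jet lying over a point where the slanted vector fields span and where $\mathcal{P}$ does not vanish identically on the fibre, so that vanishing of all iterated derivatives at this single jet would force $\mathcal{P}\equiv 0$, a contradiction. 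Note that your own non-vanishing paragraph in fact uses only these pointwise facts (spanning at one point plus fibrewise polynomiality of degree $\le m$), not the Zariski-density of $f_{\vec a_0}(\mathbb{C})$. So your proof is repaired by replacing the non-degenerate-direction claim with the paper's choice of $z_0$; the rest of your argument, including the final transfer to almost every $\vec a$ via the analytic-set/measure-zero argument (which coincides with the paper's use of the argument from the proof of Theorem~\ref{SEMIABELIAN}), then goes through unchanged.
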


\begin{proof}
Since $f$ is algebraically nondegenerate, its radial derivative
\[
\frac{\partial f}{\partial r}(z)
=
\sum_{i=1}^p \frac{z_i}{\|z\|}\,\frac{\partial f}{\partial z_i}(z),
\qquad z=(z_1,\dots,z_p)\in \mathbb{C}^p\setminus\{0\},
\]
does not vanish identically; hence the set
$
\bigl\{ z\in\mathbb{C}^p\setminus\{0\} : \tfrac{\partial f}{\partial r}(z)=0 \bigr\}
$
has zero measure in $\mathbb{C}^p$.

By Proposition~2.1 in \cite{HVX}, there exists a nonzero logarithmic jet differential
\[
{\mathcal P}_s \in H^0\!\left(\mathbb{P}^n(\mathbb{C})\times\{s\},
E^{\mathrm{GG}}_{n,m}\, T^*_{\mathbb{P}^n(\mathbb{C})}(\log D)\otimes
\mathcal{O}_{\mathbb{P}^n(\mathbb{C})}(-cm)\right).
\]
This implies that $f^{-1}\!\big(\{{\mathcal P}_s=0\}\cup D\big)$ has zero measure in $\mathbb{C}^p$.
Consequently
$
f^{-1}\!\big(\{{\mathcal P}_s=0\}\cup  D\big)\cup
\bigl\{ \tfrac{\partial f}{\partial r}=0 \bigr\}
$
has zero measure in $\mathbb{C}^p$, and we may choose $z_0\in\mathbb{C}^p\setminus\{0\}$ with
$
f(z_0)\notin \{{\mathcal P}_s=0\}\cup D \text{~and}
\
\frac{\partial f}{\partial r}(z_0)\ne 0.
$

Let $\vec{a_0}:=z_0/\|z_0\|\in S^{2p-1}(1)$ and consider the restriction
$f_{\vec{a_0}}:\mathbb{C}\to\mathbb{P}^n(\mathbb{C})\times\{s\}$, $f_{\vec{a_0}}(z)=f(z\vec{a_0})$. By the semi-continuity theorem, $\mathcal P_s$ extends to a holomorphic family $\mathcal{P}$ of nonzero logarithmic jet differentials parametrized by points of a Zariski open neighborhood $U_s$ of $s$ in $S$. Following the argument in \cite{HVX} (p.~668), there exist global slanted vector fields
$v_1,\dots,v_\ell$ (for some $0\le \ell\le m$) such that
\[
\big((v_1\cdots v_\ell)\mathcal{P}\big)\big(j_n(f_{\vec{a_0}})\big)\not\equiv 0 .
\]
Hence
\[
\tilde{\mathcal{P}}_{s}
:=(v_{1}\cdots v_{\ell})\, 
\mathcal{P}\big|_{\mathbb{P}^{n}(\mathbb{C})\times\{s\}}
\in
H^{0}\!\left(
  \mathbb{P}^{n}(\mathbb{C}),
  E^{\mathrm{GG}}_{n,m}T^{*}_{\mathbb{P}^{n}(\mathbb{C})}(\log D)
  \otimes
  \mathcal{O}_{\mathbb{P}^{n}(\mathbb{C})}\!\big(-cm+\ell(5n-2)\big)
\right)
\]
satisfies $\tilde{\mathcal{P}}_{s}\big(j_n(f_{\vec{a_0}})\big)\not\equiv 0$. Similar with the argument in the proof of the Theorem \ref{SEMIABELIAN}, we have
\[
\tilde{\mathcal{P}}_{s}\big(j_n(f_{\vec{a}})\big)\not\equiv 0 \quad \text{for almost every } \vec a\in S_p(1).
\]
\end{proof}

\noindent{\it Proof of Theorem \ref{smtXie}}
	\begin{proof}
   Since 
${\mathcal P}\big(j_k(f_{\vec a})\big)\not\equiv 0$ for some $\vec{a}\in S_{p}(1)$, 
it follows that ${\mathcal P}\big(j_k(f_{\vec a})\big) \not\equiv 0$ for almost all 
$\vec{a}\in S_{p}(1)$.  
Choose an $\vec{a}$ with 
$f_{\vec{a}}(\mathbb{C}) \not\subset \operatorname{Supp}(D)$ 
and ${\mathcal P}\big(j_k(f_{\vec a})\big) \not\equiv 0$.  
Following the proof of Theorem~1.6 in \cite{CaiRuYang}, we obtain, 
in the sense of currents,
\[
dd^{c} \log \|P(j_{k}(f_{\vec a}))\|^{2}_{h^{-1}}\ge
    \widetilde{m}\, f_{\vec a}^{*}c_{1}(A) - m (f_{\vec a}^*D)^{(1)}
\]
where $h$ is a Hermitian metric on $A^{\widetilde{m}}$.
Hence, by applying the Green–Jensen formula,
we obtain
\begin{align}\label{eq:circle-avg}
\widetilde{m}\, T_{f_{\vec a}}(r,A)- m\, N^{(1)}_{f_{\vec a}}(r, D)
    \le
    \int_{0}^{2\pi}
        \log \|P(j_{k}(f_{\vec a})(re^{i\theta})) )\|_{h^{-1}}
        \, \frac{d\theta }{2\pi}
    +
    O(1).
\end{align}
Finally, integrating \eqref{eq:circle-avg} over $S_p(1)$, we get
\begin{align*}\label{eq:Sp-integration}\nonumber
\widetilde{m}\,T_f(r,A)-m\,N_f^{(1)}(r,D)
&\le\;\int_{S_p(1)}
\left(
  \int_{0}^{2\pi}
  \log\|\mathcal{P}\!\big(j_k(f_{\vec a})(re^{i\theta})\|_{h^{-1}}
  \,\frac{d\theta}{2\pi}
\right)
d\sigma_p(\vec a) \\
&~~~~~~~~~~~~~~~~~+O(\log r).
\end{align*}
By Proposition \ref{prop3.1}, we have
\[
\widetilde{m}\, T_f(r,A) - m\, N_f^{(1)}(r, D) 
   \le_{\mathrm{exc}} S_f(r,A).
\]
\end{proof}

\noindent{\it Proof of Theorem \ref{GENERIC}}
\begin{proof}
Choose
$
\widetilde{m} = mc - \ell(5n-2) \ge m(c - 5n + 2) \ge m,
$
where $c$ is a positive integer satisfying $c \ge 5n-1$. 
Applying Theorem~\ref{smtXie} in the case 
$X=\mathbb{P}^{n}(\mathbb{C})$, with $D$ a generic hypersurface and 
$A=\mathcal{O}(1)$, we have
\[
\widetilde{m}\, T_f(r) - m\, N_f^{(1)}(r, D) 
   \le_{\mathrm{exc}} S_f(r).
\]
Then, 
\[
T_f(r) \le_{\mathrm{exc}} \frac{1}{c - 5n + 2}\, N_f^{(1)}(r, D) 
   + S_f(r).
\]

If we choose $c = 5n-1$, we get
\[
T_f(r) \le_{\mathrm{exc}} N_f^{(1)}(r, D) 
   + S_f(r).
\]
\end{proof}

\end{document}